\numberwithin{equation}{section}
\def \R {\mathbb{R}}
 \def \m {\mathbf{m}} \def \mm
\def \q {\mathbf{q}}  
\def \r {\mathbf{r}}
\def \dbar {\bar{\delta}}
\def \div  {\nabla\cdot}
\def \grad {\nabla}
\def \sgn {\operatorname{sgn}}
\def \d{\partial}
\def \froeb[#1][#2]{}
\def \->{\rightarrow}
\def \=>{\Rightarrow}
\def \asm{\simeq}
\newtheorem{theorem}{Theorem}
\newtheorem{lemma}{Lemma}
\title{\bf Skyrmion stacking in stray field-coupled ultrathin
  ferromagnetic multilayers}
\author{N. J. Dubicki\thanks{Department of Mathematical Sciences,
    New Jersey Institute of Technology, Newark, New Jersey 07102,
    USA.} \and V. V. Slastikov\thanks{School of Mathematics,
    University of Bristol, Bristol BS8 1UG, United Kingdom.} \and
  A. Bernand-Mantel\thanks{CEMES, Université de Toulouse, CNRS, 29
    Rue Jeanne Marvig, BP94347, 31055 Toulouse, France.}  \and
  C. B. Muratov\thanks{Dipartimento di Matematica, Universit\`a di
    Pisa, Largo B. Pontecorvo, 5, 56127 Pisa, Italy.} \thanks{Corresponding author: {\tt
      cyrill.muratov@unipi.it}.}}
\begin{document}

\begin{titlepage}

\maketitle

\noindent\rule{7in}{1.2pt}\\

\begin{abstract}
  This paper explores the energy landscape of ferromagnetic multilayer
  heterostructures that feature magnetic skyrmions -- tiny whirls of
  spins with non-trivial topology -- in each magnetic layer. Such
  magnetic heterostructures have been recently pursued as possible
  hosts of room temperature stable magnetic skyrmions suitable for the
  next generation of low power information technologies and
  unconventional computing. The presence of stacked skyrmions in the
  adjacent layers gives rise to a strongly coupled nonlinear system,
  whereby the induced magnetic field plays a crucial stabilizing
  role. Starting with the micromagnetic modeling framework, we derive
  a general reduced energy functional for a fixed number of ultrathin
  ferromagnetic layers with perpendicular magnetocrystalline
  anisotropy. We next investigate this energy functional in the regime
  in which the energy is dominated by the intralayer exchange
  interaction and formally obtain a finite-dimensional description
  governed by the energy of a system of one skyrmion per layer as a
  function of the position, radius and the rotation angle of each of
  theses skyrmions. For the latter, we prove that energy minimizers
  exist for all fixed skyrmion locations. We then focus on the
  simplest case of stray field-coupled ferromagnetic bilayers and
  completely characterize the energy minimizers. We show that the
  global energy minimizers exist and consist of two stray
  field-stabilized N\'eel skyrmions with antiparallel in-plane
  magnetization components. We also calculate the energy of two
  skyrmions of equal radius as a function of their separation
  distance.
\end{abstract}
  
\noindent\rule{7in}{1.2pt}\\

\tableofcontents

\end{titlepage}

\section{Introduction}

Magnetic heterostructures have recently garnered significant interest
as promising platforms for hosting room-temperature stable magnetic
skyrmions -- tiny whirls of spins with non-trivial topology -- making
them ideal candidates for next-generation low-power information
technologies and unconventional computing
\cite{fert17,finocchio21,luo21}.  Magnetic skyrmions are particle-like
topologically protected magnetic textures predicted to exist in
ferromagnetic materials since the late 1980s
\cite{bogdanov89,bogdanov89a,bogdanov94}. They are typically
stabilized by chiral Dzyaloshinskii-Moriya interaction (DMI)
\cite{dzyaloshinskii58,moriya60} and have been observed in various
magnetic materials, including chiral magnets, ferromagnetic ultrathin
films and multilayers
\cite{heinze11,muhlbauer09,yu10,soumyanarayanan17,woo16}. Magnetic
skyrmions have been extensively investigated as potential information
carriers for spintronic devices \cite{nagaosa13,fert13,jiang17}.  Such
investigations are impossible without modeling and computational
studies of these systems. Modeling of magnetic skyrmions often relies
on two-dimensional models, a choice which may be justified in the case
of bulk chiral materials \cite{bogdanov89}, or for ultrathin magnetic
films, when the film thickness is much smaller than the exchange
length \cite{leonov16,bms:prb20}.
 
In the last 10 years, there has been a growing body of skyrmion
observations in multilayer heterostructures, whose intrinsic high
tunability enables the observation of skyrmions at room
temperature\cite{woo16,moreau-luchaire16}. These heterostructures may
present a total thickness that can be larger than the exchange length,
and, most importantly, the reduction of the interlayer exchange
interaction due to the use of non-magnetic spacers facilitates the
emergence of a thickness-dependent magnetization, leading to fully
three-dimensional magnetization textures, as observed
experimentally\cite{dovzhenko18,montoya17,legrand18}. Such twisting of
the magnetization in the direction perpendicular to the layers is a
well known phenomena related to the influence of ``magnetic charges''
in ferromagnetic films with out-of-plane magnetocrystalline
anisotropy. Indeed, the magnetic field induced by magnetic charges
modifies the internal structure of domain walls, a phenomenon widely
studied in the framework of bubble materials in the 1970's
\cite{schlomann73,hubert75}.

A similar phenomenon occurs in the case of multilayers with in-plane
magnetocrystalline anisotropy in the absence of exchange coupling,
i.e. in the case of purely stray field-coupled layers
\cite{garcia05,garcia06}.  For magnetic bilayers sufficiently thin to
be in the Néel wall regime, the lower-energy state consists of two
Néel walls with opposite rotations in both layers \cite{garcia05},
partially cancelling the stray field.  Similarly, in the case of
magnetic bilayers with perpendicular magnetic anisotropy it has been
observed experimentally that the Bloch wall, which is energetically
favorable in a single layer, is replaced by two Néel walls with the
opposite rotation senses \cite{bellec10}. This configuration can be
further stabilized by the DMI, assuming a proper choice of asymmetric
interfaces, as reported in the case of skyrmionic bubbles in bilayers
with opposite DMI constants \cite{hrabec17}. For larger number of
repeats, the competition between the dipolar and the DMI interactions
leads to twisted walls which present an asymmetry in the thickness
direction with respect to the middle of the magnetic multilayer. This
twist has been evidenced experimentally in the case of domain
walls\cite{fallon18,lucassen19} and skyrmionic
bubbles\cite{legrand18}. The asymmetric twisted walls have also been
the subject of detailed analytical modeling using domain wall and
skyrmionic bubble \emph{ans\"atze}, where the impact of twist on the
spin-orbit torque dynamics of skyrmionic bubbles has been studied
\cite{legrand18a,lemesh18}.

In the present work, we focus on a study of compact magnetic skyrmions
in ultrathin ferromagnetic multilayers, i.e., magnetic
heterostructures whose total thickness lies below the Bloch wall width
of the ferromagnetic material.  We use the tools of asymptotic
analysis to investigate multilayer systems with one skyrmion per layer
in the case of purely stray field-coupled layers. We start with the
micromagnetic energy functional for a three-dimensional multilayer
system that includes the intralayer exchange, the magnetocrystalline
anisotropy (both of bulk and interfacial origin), the Zeeman, the
interfacial DMI and the full stray field interactions.  We then derive
a reduced energy functional in the case where each ferromagnetic layer
is thin compared to the exchange length and the whole stack is thin
compared to the characteristic length scale of variation of the
magnetization in the film plane. We show that in addition to the usual
local shape anisotropy term and the non-local dipolar interaction
terms that are already present in the case of single ultrathin
ferromagnetic layers \cite{kmn:arma19,ms:prsa17,
  bms:prb20,dms:mmmas24}, the expansion of the stray field energy for
multilayers leads to the appearance of a new local dipolar energy term
corresponding to interlayer volume-surface interactions. This term
cancels out in the case of identical magnetizations in the adjacent
layers, but becomes equivalent to a stabilizing layer-dependent
interfacial DMI when the in-plane magnetization components in two
consecutive layers are opposite to one another.

The obtained reduced energy functional is investigated in the regime
in which the energy is dominated by the intralayer exchange
interaction, which corresponds to the conformal limit studied in
\cite{bms:arma21} in the case of single ultrathin ferromagnetic
layers. Following the arguments of \cite{bms:arma21}, we formally
obtain a finite-dimensional description of the system of interacting
compact magnetic skyrmions with one skyrmion per layer as a function
of the position, radius and the rotation angle of each of theses
skyrmions that is expected to be asymptotically exact in the
considered limit. We then investigate the energy landscape of the
above system in the case of zero applied magnetic field and prove that
energy minimizers exist for all fixed skyrmion positions. The
difficulty in obtaining such a result is that a priori it is not clear
whether the energy could not be reduced by some skyrmions shrinking to
zero radius. We prove that this phenomenon does not occur, if the
skyrmion centers are fixed.

We then apply our results to the case of stray field-coupled
ferromagnetic bilayers in the absence of DMI, where we obtain a
complete characterization of global energy minimizers in the
intralayer exchange-dominated regime. These minimizers consist of two
concentric Néel skyrmions with anti-parallel in-plane magnetization
and a prescribed chirality.  We also provide an expression for the
energy of two skyrmions of equal radius as a function of their
separation distance and illustrate our findings with a result of
micromagnetic simulations. Notice that extending such a
characterization to the general case of multilayers presents a
difficulty that the energy minimizing sequences could consist of
skyrmions in different layers moving far apart. For bilayers, we show
that this cannot occur by an explicit analysis of all the interaction
terms, which, however, becomes intractable for higher numbers of
layers.

Our paper is organized as follows. In section \ref{s:model}, we
specify the full three-dimensional micromagnetic energy functional
with all the relevant energy terms in the multilayer geometry and
carry out its non-dimensionalization. In section \ref{s:reduced}, we
explicitly compute the energy of the magnetizations that are
independent of the thickness variable in each ferromagnetic layer. We
then carry out an asymptotic expansion of the energy as the
ferromagnetic layer and interlayer thicknesses go to zero, with the
number of layers fixed, to obtain a reduced two-dimensional
variational model of ultrathin ferromagnetic multilayers with
perpendicular magnetic anisotropy. Then, in section \ref{s:Nskyr} we
introduce an ansatz in the form of one truncated Belavin-Polyakov
skyrmion in each ferromagnetic layer, which is asymptotically valid in
the intralayer exchange-dominated regime and asymptotically compute
the finite-dimensional energy function of such a skyrmion stack. In
section \ref{s:landscape}, we then explore the basic properties of the
obtained energy function and prove that it admits a global energy
minimizer for all fixed locations of the skyrmion centers, see Theorem
\ref{t:FN}. Finally, in section \ref{s:bilayer} we completely
characterize the global energy minimizer in the simplest non-trivial
case of ultrathin stray field-coupled bilayers in the intralayer
exchange-dominated regime, see Theorem \ref{t:bilayer}. We also
corroborate our conclusions with the results of micromagnetic
simulations and characterize the interaction energy of two skyrmions
of equal radius separated by a prescribed distance.

\paragraph{Acknowledgments.} N.J.D. and C.B.M. were partially
supported by NSF via grant DMS-1908709.  A. Bernand-Mantel was
supported by France 2030 government investment plan managed by the
French National Research Agency under grant reference PEPR SPIN
[SPINTHEORY] ANR-22-EXSP-0009 and grant NanoX ANR-17-EURE-0009 in the
framework of the Programme des Investissements d'Avenir. C.B.M. is a
member of INdAM-GNAMPA and acknowledges partial support by the MUR
Excellence Department Project awarded to the Department of
Mathematics, University of Pisa, CUP I57G22000700001, and by the PRIN
2022 PNRR Project P2022WJW9H. All analytical calculations in the paper
involving special functions were carried out, using {\sc Mathematica
  14.2} software.

\section{Micromagnetic model}
\label{s:model}

Our starting point is the micromagnetic modeling framework (in the SI
units), in which the observed magnetization configurations are
interpreted as local or global energy minimizers of a micromagnetic
energy functional evaluated on vector fields
$\mathbf M : \overline{\Omega} \to \R^3$ of fixed length
$|\mathbf M| = M_s$ \cite{landau8,hubert,brown}. Here
$\Omega \subseteq \R^3$ is the spatial domain (an open set) occupied
by a single centrosymmetric crystalline ferromagnetic material,
$\mathbf M = \mathbf M(\mathbf r)$ is the magnetization vector at
point $\mathbf r = (x, y, z) \in \Omega$, and $M_s > 0$ is the
saturation magnetization (in A/m). The energy functional (in J) that
contains the exchange, bulk magnetocrystalline anisotropy, Zeeman,
stray field, interfacial magnetocrystalline anisotropy and interfacial
DMI contributions, in that order, reads
\begin{align}
  \label{eq:E3dM}
  \mathcal E(\mathbf M)
  & = \int_\Omega \left\{ {A \over M_s^2} |\nabla
    \mathbf M|^2 + K_u \Phi_u \left ( {\mathbf M \over M_s} \right) -
    \mu_0 \mathbf H_a \cdot \mathbf M - 
    \frac{\mu_0}{2} \mathbf H_d \cdot \mathbf M \right\} d^3 r \notag
  \\
  & + \int_{\partial \Omega} \left\{
    K_s \Phi_s \left( {\mathbf M \over M_s}, \mathbf r \right) +
    {D_s\over M_s^2} \left( M^\| 
    \nabla_\perp \cdot \mathbf 
    M^\perp - \mathbf M^\perp \cdot \nabla_\perp M^\| \right) 
    \right\} d \mathcal H^2(\r).  
\end{align}
Here $A > 0$ is the exchange stiffness (in J/m), $K_u \geq 0$ is the
bulk magnetocrystalline anisotropy constant (in J/m$^3$),
$\Phi_u: \mathbb S^2 \to \R^+= [0, \infty)$ specifies the dependence
of the bulk anisotropy energy on the direction of $\mathbf M$,
$\mu_0 = 4 \pi \times 10^{-7}$ H/m is the vacuum permeability,
$\mathbf H_a \in \R^3$ is the applied magnetic field in (A/m), and
$\mathbf H_d : \R^3 \to \R^3$ is the demagnetizing field (in A/m)
produced by $\mathbf M$ via the solution of the stationary Maxwell's
equations
\begin{align}
  \label{eq:maxwell}
  \nabla \cdot (\mathbf H_d + \mathbf M) = 0, \qquad \nabla \times
  \mathbf H_d = 0,
\end{align}
distributionally in $\R^3$, with $\mathbf M$ extended by zero in
$\R^3 \backslash \overline{\Omega}$ \cite{dmrs:sima20}. The
interfacial terms in the second line of \eqref{eq:E3dM} contain the
interfacial DMI constant $D_s : \partial \Omega \to \R$ (in J/m),
which may take different values depending on the adjacent non-magnetic
material, the DMI energy term written in terms of
$\mathbf M = (\mathbf M^\perp, M^\|)$, where $M^\|$ is the component
of $\mathbf M$ along the normal to $\partial \Omega$ and
$\mathbf M^\perp$ is the tangential component to $\partial \Omega$,
respectively, and $\nabla_\perp$ is the tangential gradient,
$K_s : \partial \Omega \to \R^+$ is the interfacial magnetocrystalline
anisotropy constant (in J/m$^2$), which may also take different values
depending on the adjacent non-magnetic material, and
$\Phi_s : \mathbb S^2 \times \partial \Omega \to \R^+$ specifies the
dependence of the interfacial anisotropy energy on the magnetization
orientation relative to the crystalline axes in $\Omega$ and the
normal to $\partial \Omega$.  When $\Omega$ is unbounded, one often
needs to subtract the contribution of a fixed reference configuration
$\mathbf M^\star$ from the integrand in \eqref{eq:E3dM} to make the
resulting integrals convergent.

As usual, we introduce the normalized magnetization vector
$\mathbf m : \Omega \to \mathbb S^2$, extended by zero to the rest of
$\R^3$. In terms of $\mathbf m$ the energy becomes
\begin{align}
  \label{eq:E3dm}
  \mathcal E(M_s \mathbf m)
  &
    = \int_\Omega \left( A |\nabla \mathbf
    m|^2 + K_u \Phi_u(\mathbf m) - 2 K_d \mathbf h \cdot \m - K_d
    \mathbf 
    h_d \cdot \mathbf m \right) 
    d^3 r \notag \\
  & + \int_{\partial \Omega} \left(K_s \Phi_s(\m, \r) + D_s (m^\|
    \nabla_\perp \cdot \m^\perp - 
    \m^\perp \cdot \nabla m^\|)  \right) d \mathcal H^2(\r),
\end{align}
in which $K_d = \frac12 \mu_0 M_s^2$, $\mathbf h = \mathbf H_a / M_s$,
$\mathbf h_d = \mathbf H_d / M_s$, $\m = (\m^\perp, m^\|)$ on
$\partial \Omega$ as before. Note that the demagnetizing field
$\mathbf h_d$ satisfies \cite{dmrs:sima20}
\begin{align}
  \label{eq:hdU}
  \mathbf h_d = -\nabla U, \qquad \Delta U = \nabla \cdot \mathbf m,   
\end{align}
distributionally in $\R^3$. In particular, the magnetostatic potential
$U$ is given by
\begin{align}
  \label{eq:U}
  U(\mathbf r) = - \int_{\mathbb R^3} {\nabla \cdot \mathbf m(\mathbf r')
  \over 4 \pi |\mathbf r - \mathbf r'|} \, d^3 r', \qquad \mathbf r \in
  \R^3.
\end{align}
The quantity $\uprho_\m = -\nabla \cdot \mathbf m$ hence has the
meaning of the magnetic charge density distributionally in $\R^3$,
which includes the regular contribution of the volume charges from the
absolutely continuous part of $\uprho_\m$ in $\Omega$ and a singular
contribution to $\uprho_\m$ of the surface charges from the jump of
$\mathbf m$ to zero across $\partial \Omega$.

\begin{figure}
\centering
\includegraphics[width=10cm]{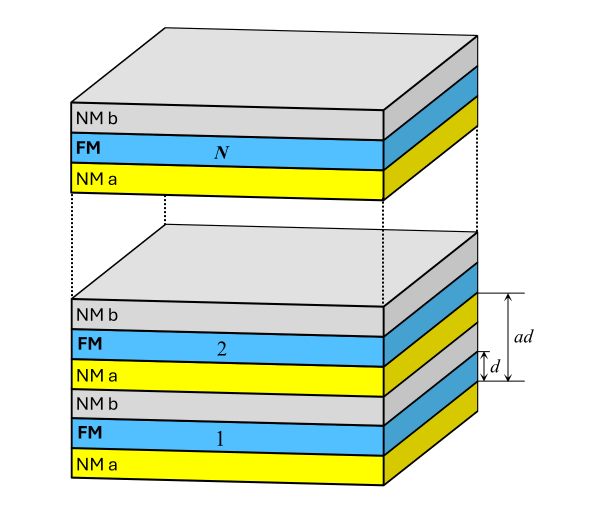}
\caption{Schematics of the geometry of a multilayer system. The
  heterostructure consists of $N$ repeats of a sandwich of thickness
  $ad$ in the form of a layer of one non-magnetic material (NM a),
  followed by a layer of a ferromagnet (FM) of thickness $d$, followed
  by a layer of another non-magnetic material (NM b), from the bottom
  to the top.}
\label{f:layers}
\end{figure}

We now specify the geometry of interest, which is that of a system of
$N$ identical ferromagnetic layers of thickness $d$ separated by
non-magnetic spacers of thickness $(a - 1) d$ with $a > 1$, so that the
total thickness of the magnetic layer plus the non-magnetic layer is
$ad$:
\begin{align}
  \label{eq:Om3d}
  \Omega = \R^2 \times \bigcup_{n=1}^N  \big[ (n-1)ad, (n-1)ad
  + d \big],
\end{align}
see Fig. \ref{f:layers}. The magnetic material is assumed to be
uniaxial, with an easy axis perpendicular to the layers, so that
$\Phi_u(\mathbf m) = |\mathbf m^\perp|^2$, where we now use the
convention
$\mathbf m(\mathbf r) = (\mathbf m^\perp(\mathbf r),
m^\parallel(\mathbf r))$ with $\mathbf m^\perp(\mathbf r) \in \R^2$
and $m^\parallel(\mathbf r) \in \R$ being the in-plane and
out-of-plane components of the magnetization, respectively, for all
$\mathbf r \in \R^3$. Similarly, we assume that the interfacial
anisotropy penalizes the tangential component of the
magnetization. Hence, we take $\Phi_s(\m, \r) = |\m^\perp|^2$ as
well. As is common in the spintronic multilayer materials
\cite{moreau-luchaire16,soumyanarayanan17}, the spacer is assumed to
generally consist of two sublayers, so the upper surface of each
magnetic layer is characterized by the interfacial DMI constant
$D_s^+$ and the interfacial anisotropy constant $K_s^+$, while the
bottom surface is characterized by the parameters $D_s^-$ and $K_s^-$,
respectively. Notice that in the case of a single material spacer we
simply set the parameters of the two surfaces equal to each
other. Finally, we assume that the applied field is perpendicular to
the film plane: $\mathbf h = h \hat{\mathbf z}$ for $h \in \R$.

As can be easily seen from the above choices, for $K > K_d$, where
\begin{align}
  \label{eq:K}
  K = K_u + {K_s^+ + K_s^- \over d}  
\end{align}
is the total magnetocrystalline anisotropy constant that takes into
account the interfacial magnetocrystalline anisotropy, the uniform
magnetization prefers to point out of the film plane. Indeed, if
$\mathbf m$ is constant across $\Omega$, the solution of the Maxwell's
equation in \eqref{eq:maxwell} is given by $\mathbf h_d = -\mathbf m$
in $\Omega$, resulting in the energy density
$(K - K_d) |\mathbf m^\perp|^2$ minimized by
$\mathbf m = \pm \hat{\mathbf z}$. Letting $\chi_n$ be the
characteristic function of the $n$-th interval
$I_n = \big( (n-1)ad, (n-1)ad + d \big) \subset \R$, we hence define
the reference configuration in which the magnetization points downward
in all the layers and is extended by zero to the whole of $\R^3$ as
\begin{align}
  \label{eq:mstar}
  \mathbf m^\star(x, y, z) = - \sum_{n=1}^N \hat{\mathbf z} \chi_n(z),   
\end{align}
whose associated demagnetizing field is
$\mathbf h_d^\star = -\nabla U^\star = -\mathbf m^\star$, where
$U^\star$ is a bounded solution of \eqref{eq:hdU} with
$\m = \m^\star$.  Subtracting its contribution from the integrand in
\eqref{eq:E3dm} and choosing the units of energy and length to be
given by $Ad$ and the exchange length $\ell_{ex} = \sqrt{A / K_d}$,
respectively, we can write the resulting renormalized energy
functional (non-dimensionalized) as
\begin{align}
  \label{eq:E}
  E(\mathbf m)
  & = {1 \over \delta} \sum_{n=1}^N
    \int_{(n-1)a\delta}^{((n-1)a + 1)\delta} \int_{\R^2}
    \left( |\nabla \mathbf
    m|^2 + Q_u |\mathbf m^\perp|^2 - 2 h  (m^\| +
    1) + \mathbf m \cdot \nabla U - 1 \right)
    d^2 r \, dz \notag \\
  &  + \sum_{n=1}^N \int_{\R^2} \left( Q_s^-
    |\m^\perp|^2 + 2 \kappa^- \m^\perp  
    \cdot \nabla m^\| \right) \bigg|_{\{z = (n-1)a \delta\}} \, d^2 r
  \\ 
  & + \sum_{n=1}^N \int_{\R^2} \left( Q_s^+ 
    |\m^\perp|^2 -2  \kappa^+ \m^\perp  
    \cdot \nabla m^\| \right) \bigg|_{\{z = ((n-1)a + 1)
    \delta\}} \, d^2 r, \notag
\end{align}
where we integrated the DMI by parts as in \cite{bms:prb20,bms:arma21}
to make it more compact and mathematically well-behaved. Here
\begin{align}
  \label{eq:dQ}
  \delta = {d \over \ell_{ex}}, \qquad Q_u = {K_u \over K_d}, \qquad
  Q_s^\pm = {K_s^\pm \over d K_d}, \qquad \kappa^\pm = {D_s^\pm \over
  d \sqrt{A K_d}}
\end{align}
are the dimensionless ferromagnetic layer thickness, the
magnetocrystalline bulk and interfacial anisotropies' quality factors,
and the dimensionless DMI strengths on the top and bottom surfaces of
the film, respectively. Note that with this definition
$E(\m^\star) = 0$.

\section{Reduced model for ultrathin multilayers}
\label{s:reduced}

We now consider the situation in which each ferromagnetic layer is
thin compared to the exchange length, i.e., the case $\delta \ll 1$,
when the variations of the magnetization along the $z$-direction in
each layer are highly penalized. In this case, following
\cite{garcia99}, it is appropriate to consider the magnetizations
$\mathbf m = \mm$ that do not vary across each layer and hence have
the particular form
\begin{equation}
  \mm(x,y,z) = \sum_{n=1}^{N} \m_n(x,y)\chi_n(z) = \sum_{n=1}^{N} \left(
    \m_n^\perp (x,y) + \hat{\mathbf z} m_n^\| (x, y) \right) \chi_n(z) ,
\end{equation}
where $\m_n : \R^2 \to \mathbb S^2$ for each $n = 1, \ldots, N$,
$\m_n^\perp$ is treated as an in-plane vector in $\R^3$, and now
\begin{align}
  \label{eq:chin}
  \chi_n(z) =
  \begin{cases}
    1 & \text{if} \ z \in [(n-1)a\delta, ((n-1)a + 1)\delta], \\
    0 & \text{otherwise.}
  \end{cases}
\end{align}

Focusing on the renormalized stray field energy, which for
configurations $\m_n(x, y)$ converging to $-\hat{\mathbf z}$
sufficiently fast as $x^2 + y^2 \to \infty$ can be rewritten, after
integrations by parts, as
\begin{align}
  \label{eq:Ed}
  E_d(\mm) = \frac{1}{\delta}\int_{\R^3} \left( |\grad U|^2 - \sum_n
             \chi_n \right)d^3r.
\end{align}
We define $\mm_r = \mm - \m^\star$ and $U_r= U - U^\star$, and observe
that
\begin{equation}
  E_d(\mm) = \frac{1}{\delta}\int_{\R^3} \left( |\grad U_r|^2 - 2
    \sum_{n=1}^N \chi_n \d_z
    U_r \right) d^3r = E_d' (\mm)+ E_d'' (\mm).
\end{equation}
Examining the second term, we note that 
\begin{align}
  \sum_{n=1}^N  \int_{\R^3}  \chi_n \d_z U_r \, d^3 r = - \int_{\R^3}
  \nabla U^\star \cdot \nabla U_r \, d^3 r =  \int_{\R^3}
  U^\star \, \nabla \cdot \mm_r \, d^3 r \notag \\
  = - \int_{\R^3}
  \mm_r \cdot \nabla U^\star \, d^3 r  = \sum_{n=1}^N \int_{\R^3}
  (m_n^\| + 1) \chi_n \, d^3 r,
\end{align}
after integrations by parts and using \eqref{eq:hdU}. Therefore, using
the fact that $|\m_n|=1$ and hence
$2(m_n^\| + 1) = |\m_n^\perp|^2 + |m_n^\|+1|^2$ for all $n$, we obtain
\begin{equation} \label{eq:Epp} E_d'' (\mm) = -\frac{2}{\delta}
  \sum_{n=1}^N \int_{\R^3} \chi_n \d_z U_r \, d^3r =
  -\sum_{n=1}^{N}\int_{\R^2} \left( |\m_{n}^{\perp}|^2 + |m_{n}^{\|} +
    1|^2 \right) d^2r.
\end{equation}

We now investigate
\begin{equation}
  E_d'(\mm) = \frac{1}{\delta}\int_{\R^3} |\grad U_r|^2 d^3r =
  \frac{1}{\delta}\int_{\R^3}\int_{\R^3}  \frac{\div\mm_r(\r )\div
    \mm_r(\r' )}{4\pi |\r - \r' |} \, d^3
 r \, d^3  r',
\end{equation}
as can be seen from \eqref{eq:U} and an integration by parts.
Explicitly, this integral reads
\begin{equation}
  E_d' (\mm) = \frac{1}{\delta}\int_{\R}\int_{\R}\int_{\R^2} \int_{\R^2}
  \frac{\div\mm_r(\r,z)\div \mm_r(\r',z')}{4\pi \sqrt{|\r-\r'|^2 +
      (z-z')^2}} \, d^2r\, d^2r' \, dz \, dz',
\end{equation}
where from now on the variables of integration $\r, \r' \in \R^2$.
Noting that
\begin{align}
  \partial_z \widetilde m_r^\|(\r, z)  = \sum_{n=1}^n( m_n^\|(\r)
  + 1) [\updelta_{a(n - 1) \delta}(z) - \updelta_{(a(n-1) + 1)\delta}(z) ],
  \qquad \forall \r \in \R^2,
\end{align}
where $\updelta_\alpha(z)$ is the Dirac delta centered at
$\alpha \in \R$, and integrating in $z$ and $z'$, we obtain
\begin{equation}
\begin{aligned}
  E'_d (\mm)= \frac{1}{\delta} \sum_{n=1}^{N} \sum_{k=1}^{N}
  \int_{\R^2}\int_{\R^2} &\bigg\{ K_{vv}^{a(k-n)}(|\r-\r'|) \
  \div\m_n^\perp(\r) \div\m_k^\perp(\r')\\
  &+ K_{vs}^{a(k-n)}(|\r-\r'|) \ \div\m_n^{\perp}(\r) (m_k^{\|}(\r')+1) \\
  &+ K_{sv}^{a(k-n)}(|\r-\r'|) \ (m_n^{\|}(\r) +1)  \div\m_k^\perp(\r') \\
  &+ K_{ss}^{a(k-n)}(|\r-\r'|) \ (m_n^{\|}(\r) +1) (m_k^{\|}(\r')+1)
  \bigg\} \, d^2r \, d^2r',
  \end{aligned}
\end{equation}
where for $u \in \mathbb R$ we defined the volume-volume,
volume-surface and surface-surface charge interaction kernels as
\begin{align}
  K_{vv}^u(r) &=  {1 \over 4 \pi} \int_0^\delta \int_{u
                \delta}^{(u+1)\delta} {dz' \, dz \over 
                \sqrt{r^2 + (z - z')^2}}, \\ 
  K_{vs}^u(r) &=  {1 \over 4 \pi} \int_0^\delta {dz \over
                \sqrt{r^2 + (z - u \delta)^2}} - {1 \over 4 \pi}
                \int_0^\delta {dz \over
                \sqrt{r^2 + (z - (1 + u) \delta)^2}},    \\ 
  K_{sv}^u(r) &= {1 \over 4 \pi} \int_0^\delta {dz' \over 
                \sqrt{r^2 + (z' + u \delta)^2}} - {1 \over 4 \pi}
                \int_0^\delta {dz' \over 
                \sqrt{r^2 + (z' - (1 - u) \delta)^2}},  \\ 
  K_{ss}^u(r) &=  {1 \over 2 \pi \sqrt{r^2 + 
                u^2 \delta^2}} - {1 \over 4 \pi \sqrt{r^2 + 
                (u - 1)^2 \delta^2}} - {1 \over 4 \pi \sqrt{r^2 + 
                (u + 1)^2 \delta^2}} . 
\end{align}
We see clearly that $K_{vs}^u(r) = K_{sv}^{-u}(r)$, allowing these
terms to be combined in the expression for $E_d'$.  For simplicity, we
express the energy as the sum of the interaction energies:
\begin{equation}
  \label{eq:Edp3}
  E'_d (\mm) = \sum_{n=1}^{N}  \sum_{k=1}^{N} \left\{ E_{vv}^{(nk)}
    (\mm) + E_{vs}^{(nk)}  (\mm) + E_{ss}^{(nk)}  (\mm) \right\},
\end{equation}
where
\begin{align}
  E_{vv}^{(nk)}  (\mm) &= \frac{1}{\delta} \int_{\R^2} \int_{\R^2}                         
                         K_{vv}^{a(k-n)} (|\r - \r'|) \div
                         \m_n^\perp(\r) \div \m_k^\perp(\r')
                         \, d^2 r \, d^2 r', \\ 
  E_{vs}^{(nk)}  (\mm)  &= - \frac{2}{\delta} \int_{\R^2} \int_{\R^2} 
                          K_{vs}^{a(k-n)} (|\r - \r'|)  
                          \m_n^\perp(\r)  \cdot \nabla m_k^\| (\r')
                          \, d^2 r \, d^2 r', \\ 
  E_{ss}^{(nk)} (\mm) &= \frac{1}{\delta} \int_{\R^2} \int_{\R^2}  
                        K_{ss}^{a(k-n)} (|\r - \r'|) ( m_n^\|(\r) + 1
                        ) (m_k^\| (\r') + 1) 
                        \, d^2 r \, d^2 r',
\end{align}
and we integrated by parts in the second line.

We can explicitly evaluate the interaction kernels:
\begin{align}
  K_{vv}^u(r) &=  \frac{1}{4\pi} \bigg[  2 \sqrt{r^2+
                u^2 \delta ^2}-\sqrt{r^2+ (1 + u)^2 \delta^2}-\sqrt{r^2+
                (1 - u)^2 \delta^2}   \notag \\
              & \quad + (1 + u) \delta \sinh ^{-1}\left(\frac{
                (1 + u) \delta}{r}\right) 
                + (1 - u) \delta \sinh
                ^{-1}\left(\frac{ (1 
                - u) \delta}{r}\right)  - 2
                u \delta  \sinh
                ^{-1}\left(\frac{
                u \delta}{r}\right) \bigg], \\ 
  K_{vs}^u(r) &= \frac{1}{4\pi} \left[ 2 \sinh ^{-1}\left(\frac{
                u \delta}{r}\right)-\sinh ^{-1}\left(\frac{
                (u+1) \delta}{r}\right)-\sinh ^{-1}\left(\frac{ (u - 1)\delta
                }{r}\right) \right] .  
\end{align}
The obtained expressions for $K_{vv}^s$, $K_{vs}^u$ and $K_{ss}^u$ may
be viewed as generalizations of those obtained in \cite{garcia99} for
$u = 0$. Notice that by an explicit calculation we have
\begin{align}
  \label{eq:intKss}
  \int_{\R^2} K_{ss}^u(|\r|) \, d^2 r =
  \begin{cases}
    \delta (1 - |u|), & |u| \leq 1, \\
    0, & |u| > 1.
  \end{cases}
\end{align}
By a similar explicit calculation
\begin{align}
  \label{eq:intKvs}
  \int_{\R^2} K_{vs}^u (|\r|) \, d^2 r = {\delta^2 \over 2} \, \text{sgn}(u), \qquad |u|
  > 1.
\end{align}
Also notice that $K_{vs}^0(r) = 0$, which can also be seen immediately
from the symmetry considerations.

We can further simplify the surface-surface interaction term. First we
observe that
\begin{equation}
\begin{aligned}
  \left( m_n^\|(\r) - m_n^\|(\r') \right)\left( m_k^\|(\r) -
    m_k^\|(\r') \right) &= \left(m_n^\|(\r) + 1
  \right)\left(m_k^\|(\r) + 1 \right) + \left(m_n^\|(\r') + 1
  \right)\left(m_k^\|(\r') + 1 \right) \\ 
  &- \left(m_n^\|(\r) + 1 \right) \left(m_k^\|(\r') + 1 \right) -
  \left(m_n^\|(\r') + 1 \right) \left(m_k^\|(\r) + 1 \right).
\end{aligned}
\end{equation}
Integrating with the kernel $K_{ss}^{a(k-n)}$, we find due to the
invariance of $K_{ss}^{a(k-n)}(|\r-\r'|)$ with respect to
interchanging $\r$ and $\r'$ that
\begin{equation}
\begin{aligned}
  \int_{\R^2}\int_{\R^2} & K_{ss}^{a(k-n)} (|\r-\r'|) \left(
    m_n^\|(\r) - m_n^\|(\r') \right)\left( m_k^\|(\r) - m_k^\|(\r')
  \right) \ d^2r
  \ d^2r'  \\
  &= 2 \int_{\R^2}\int_{\R^2} K_{ss}^{a(k-n)} (|\r-\r'|)
  \left(m_n^\|(\r) + 1
  \right)\left(m_k^\|(\r) + 1 \right) \ d^2r \ d^2r'   \\
  &- 2 \int_{\R^2}\int_{\R^2} K_{ss}^{a(k-n)} (|\r-\r'|)
  \left(m_n^\|(\r) + 1 \right) \left(m_k^\|(\r') + 1 \right) \ d^2r \
  d^2r' .
\end{aligned}
\end{equation}
Therefore, the surface-surface self-interaction energy for layer $n$
may be written as
\begin{equation}
  \label{eq:surfsurfnn}
  E_{ss}^{(nn)}(\mm) =
  \int_{\R^2} |m_n^\| + 1 |^2 d^2 r - \frac{1}{2\delta}
  \int_{\R^2}\int_{\R^2} K_{ss}^0(|\r-\r'|) \left( m_n^\|(\r) -
    m_n^\|(\r') \right)^2 \ d^2r \ d^2r',
\end{equation}
where we noted that by \eqref{eq:intKss} we have
$\int_{\R^2} K_{ss}^0(|\r|) d^2 r = \delta$.  At the same time, the
surface-surface interaction energy for different layers $n \not= k$ is
\begin{equation} \label{eq:surfsurfnk}
E_{ss}^{(nk)} (\mm) = - \frac{1}{2\delta} \int_{\R^2}\int_{\R^2}
  K_{ss}^{a(k-n)} (|\r-\r'|) \left( m_n^\|(\r) - m_n^\|(\r')
  \right)\left( m_k^\|(\r) - m_k^\|(\r') \right) \ d^2r \ d^2r',
\end{equation}
where again we used \eqref{eq:intKss} and noted that
$\int_{\R^2} K_{ss}^u(|\r|) d^2 r = 0$ for all $u > 1$, recalling that
$a > 1$ and $|k - n| \geq 1$. Then, combining all three interactions
in \eqref{eq:Edp3} with \eqref{eq:Epp} we get an exact expression for
the energy of $\mm$:
\begin{align}
  \label{eq:Emmexact}
  E_d(\mm) = -\sum_{n=1}^N \int_{\R^2} |\m^\perp_n|^2 \, d^2 r +
  \frac{1}{\delta} \sum_{n=1}^N \sum_{k=1}^N  
  \int_{\R^2} \int_{\R^2} K_{vv}^{a(k-n)} (|\r - \r'|) \div 
  \m_n^\perp(\r) \div \m_k^\perp(\r')
  \, d^2 r \, d^2 r' \notag \\ 
  - \frac{2}{\delta} \sum_{n=1}^N \sum_{k=1}^N  \int_{\R^2}
  \int_{\R^2}  
  K_{vs}^{a(k-n)} (|\r - \r'|)  
  \m_n^\perp(\r)  \cdot \nabla m_k^\| (\r')
  \, d^2 r \, d^2 r' \notag \\
  - \frac{1}{2\delta} \sum_{n=1}^N \sum_{k=1}^N
  \int_{\R^2}\int_{\R^2} 
  K_{ss}^{a(k-n)} (|\r-\r'|) \left( m_n^\|(\r) - m_n^\|(\r')
  \right)\left( m_k^\|(\r) - m_k^\|(\r') \right) \ d^2r \ d^2r'. 
\end{align}

We now simplify this cumbersome expression in such a way that it is
valid asymptotically as $\delta \to 0$ with $\mm$ and all the other
parameters fixed. Since the layer displacement parameter, $u$, is
understood to be fixed with respect to $\delta \to 0$, one expands to
find the following asymptotic behaviors for $\delta \ll 1$:
\begin{align}
  K_{vv}^u(r) &=  \frac{\delta^2}{4\pi r} +
                o(\delta^2), \label{eq:Kvvs} \\
  K_{vs}^u(r) &=  \frac{u\delta^3}{4\pi r^3} + o(\delta^3),  \label{eq:Kvs} \\
  K_{ss}^u(r) &= \frac{\delta^2}{4\pi r^3} + o(\delta^3).
\end{align}
Therefore, for $\m_n$ sufficiently smooth the expression in
\eqref{eq:surfsurfnn} may be rendered asymptotically for $\delta \ll 1$ as
\begin{equation}
  \label{eq:surfasmnn}
  E_{ss}^{(nn)} (\mm) =
  \int_{\R^2} | m_n^\| + 1 |^2 \, d^2 r - \delta
  \int_{\R^2}\int_{\R^2} \frac{ ( m_n^\|(\r) - m_n^\|(\r') )^2 }{ 8\pi
    |\r-\r'|^3 } \ d^2r \ d^2r' + o(\delta),
\end{equation}
where we noted that the strong singularity of the kernel is partially
cancelled by $(m_n^\|(\r) - m_n^\|(\r'))^2 = O(|\r - \r'|^2)$ for
$m_n^\| \in C^1(\R^2)$, making the integral in the right-hand side of
\eqref{eq:surfasmnn} convergent. Similarly, we have
\begin{equation}
  E_{ss}^{(nk)} (\mm) \asm - \delta \int_{\R^2}\int_{\R^2} \frac{ (
    m_n^\|(\r) - m_n^\|(\r') )( m_k^\|(\r) - m_k^\|(\r') ) }{ 8\pi
    |\r-\r'|^3 } \ d^2r \ d^2r' + o(\delta) 
\end{equation}
for all $n \not= k$. Meanwhile, using \eqref{eq:Kvvs} the
volume-volume interactions can be asymptotically expressed as
\begin{equation}
\begin{aligned}
  E_{vv}^{(nk)} (\mm) = \delta \int_{\R^2} \int_{\R^2} \frac{
    \div\m_n^\perp(\r) 
    \div\m_k^\perp(\r') }{4\pi |\r-\r'| } \ d^2r \ d^2r'
  +o(\delta).
\end{aligned}
\end{equation}

As can be seen from \eqref{eq:Kvs}, we have
$\delta^{-2} K_{vs}^u(r) \to 0$ as $\delta \to 0$ for all $r >
0$. Care, however, is needed in passing to the limit in the integral,
as the strong singularity of the kernel in \eqref{eq:Kvs} precludes
passing the pointwise limit to the value of $E_{vs}^{(nk)}(\mm)$ at
$O(\delta)$. In fact, from \eqref{eq:intKvs} and the above observation
it is clear that for all $|u| > 1$ we have
\begin{align}
  {2 \over \delta^2} K_{vs}^u(|\r|) \to \text{sgn}(u) \updelta (\r)
  \qquad \text{as } \delta \to 0,
\end{align}
in the sense of distributions, where $\updelta(\r)$ is a Dirac delta
centered at the origin in $\R^2$.  As a consequence, by an argument
similar to the one in the case of the surface-surface interactions the
volume-surface interaction energies admit the following asymptotic
expansion:
\begin{equation}
  E_{vs}^{(nk)}  (\mm) = -\text{sgn}(k-n) \delta \int_{\R^2} 
  \m^\perp_n \cdot \nabla m_k^\| d^2 r + o(\delta).
\end{equation}

Thus, combining all the terms as in \eqref{eq:Emmexact}, we obtain
that to within $o(\delta)$ accuracy for $\delta \ll 1$ and with all
other parameters fixed, the total energy is asymptotically
$E(\mm) \asm E_N(\{\m_n\})$, where
\begin{align}
  \label{eq:E0}
  E_N(\{\m_n\})
  &
    = \sum_{n=1}^N \int_{\R^2} \left( |\nabla \m_n|^2 + (Q -
    1) |\m_n^\perp|^2 - 2 h (m_n^\| + 1) - 2 \kappa \m_n^\perp
    \cdot \nabla m_n^\| \right) d^2 r \notag \\
  & - \delta \sum_{n=1}^{N-1} \sum_{k=n+1}^N \int_{\R^2}
    \left( 
    \m^\perp_n \cdot \nabla m_k^\|  - 
    \m^\perp_k  \cdot \nabla m_n^\| \right) \, d^2 r \\
  & + \delta \sum_{n=1}^{N} \sum_{k=1}^{N} \int_{\R^2}\int_{\R^2} \bigg(
    \frac{ \div\m_n^\perp(\r) \div\m_k^\perp(\r') }{4\pi|\r-\r'|} -
    \frac{ (m_n^{\|}(\r) - m_n^\| (\r') )( m_k^{\|}(\r)- m_k^\|(\r')
    ) }{8\pi|\r-\r'|^3} \bigg) d^2r\ d^2r' , \notag
\end{align}
where we defined the material quality factor and the dimensionless DMI
strength 
\begin{align}
  \label{eq:Qkappa}
  Q = {K \over K_d}, \qquad \kappa = \kappa^+ - \kappa^-,
\end{align}
respectively. We note that, as can be seen from the above derivation,
the asymptotic formula in \eqref{eq:E0} is rigorously valid in the
limit $\delta \to 0$ at least for every $\m_n : \R^2 \to \mathbb S^2$
such that $\m_n + \hat{\mathbf z} \in C^\infty_c(\R^2 ; \R^3)$ for
every $n = 1, \ldots, N$, and by an approximation argument also
extends to the natural class of functions
$\m_n + \hat{\mathbf z} \in H^1(\R^2; \R^3)$.

The obtained energy functional in \eqref{eq:E0} generalizes the one
obtained in \cite{bms:prb20, bfbsm:prb23} for a single layer to the
case of multilayers. Notice that in addition to the non-local dipolar
interactions present in the case of a single-layer, a peculiar new
local term appears in the second line of \eqref{eq:E0} that
corresponds to the interaction of the out-of-plane magnetic moments
with the approximately vertical magnetic field formed by the volume
charges immediately above and below the layers (see also
\cite{lemesh18}). This term vanishes, as expected, in the case when
all layers have identical magnetizations, in which case the energy is
equivalent to that of the magnetization of a single layer of thickness
$N \delta$. However, when the in-plane magnetizations in two layers
are opposite to one another: $\m^\perp_n = -\m^\perp_k$, this term
works as an effective interfacial DMI term, favoring N\'eel rotation
in these layers.

The reduced energy is applicable in the situation in which the
characteristic scale of variation of $\m_n(x, y)$ exceeds the
thickness $a N \delta$ of the entire stack. Note that in this regime
the energy is independent of $a$ and, hence, does not see the presence
of the non-magnetic spacers. This property is known to be violated
once the magnetization configurations acquire the lateral size
comparable to the stack thickness \cite{legrand18}, in which case the
full kernels $K^u_{vv}$, $K^u_{vs}$ and $K^u_{ss}$ need to be
utilized, resulting in a much more cumbersome model that only permits
a numerical treatment \cite{legrand18a,lemesh18,buttner18}.


\section{Energy of $N$ stacked skyrmions}
\label{s:Nskyr}

We have demonstrated that for ultrathin multilayers the micromagnetic
energy functional of a magnetization configuration
$\m : \overline{\Omega} \to \mathbb S^2$ obeys
$E(\m) \simeq E_N(\{\m_n\})$, where $E_N(\{\m_n\})$ is given by
\eqref{eq:E0}. This energy may be further simplified by taking
advantage of the scaling properties of different terms in the
energy. For $Q > 1$, introducing a rescaling and the new parameters:
\begin{align}
  \label{eq:scaledb}
  \r \to {\r \over \sqrt{Q - 1}}, \qquad \bar \delta = {\delta \over
  \sqrt{Q - 1}}, \qquad \bar h = {h \over Q - 1}, \qquad \bar \kappa =
  {\kappa \over \sqrt{Q - 1}}, 
\end{align}
we see that
$E_N(\{ \m_n(\cdot / \sqrt{Q - 1}) \}) = \bar E_N(\{ \m_n \})$, where
\begin{align}
  \label{eq:EN}
  \bar E_N(\{\m_n
  & \}) =
    \sum_{n=1}^N \int_{\R^2} \left( |\nabla \m_n|^2 +
    |\m_n^\perp|^2 - 2 \bar h (m_n^\| + 1) - 2 \bar \kappa
    \m_n^\perp 
    \cdot \nabla m_n^\| \right) d^2 r \notag \\
  & - \bar\delta \sum_{n=1}^{N-1} \sum_{k=n+1}^N \int_{\R^2}
    \left( \m^\perp_n  \cdot \nabla m_k^\| - 
    \m^\perp_k  \cdot \nabla m_n^\|\right) \, d^2 r \\
  & + \bar\delta \sum_{n=1}^{N} \sum_{k=1}^{N} \int_{\R^2}\int_{\R^2} \bigg(
    \frac{ \div\m_n^\perp(\r) \div\m_k^\perp(\r') }{4\pi|\r-\r'|} -
    \frac{ (m_{n}^{\|}(\r) - m_n^\| (\r') )( m_k^{\|}(\r)- m_k^\|(\r')
    ) }{8\pi|\r-\r'|^3} \bigg) d^2r\ d^2r' . \notag
\end{align}
Hence in the following we focus our attention on the study of the
energy $\bar E_N$ for the magnetization configurations consisting of a
single skyrmion in each ferromagnetic layer. The latter may be
specified by prescribing the topological degree +1 to the
magnetization in each layer:
\begin{align}
  \label{eq:degreen}
  \mathcal N(\m_n) = {1 \over 4 \pi} \int_{\R^2} \m_n \cdot
  (\partial_x \m_n \times \partial_y \m_n) \, d^2 r = 1 \qquad \forall
  n = 1, \ldots, N,
\end{align}
which together with some additional technical assumptions should
ensure existence of local minimizers of the energy
\cite{bms:prb20,bms:arma21}.

For $N = 1$, it is known that for $\bar h = 0$ the energy $\bar E_N$
admits minimizers in an appropriate function class for all
$0 < \bar \delta < \bar \delta_0$, with $\bar \delta_0 > 0$ universal
\cite{bms:prb20}. Moreover, the energy-minimizing profiles admit a
complete asymptotic characterization in the conformal limit
$\bar \kappa, \bar \delta \to 0$ \cite{bms:arma21}. After a suitable
translation, dilation and rotation, these profiles approach the
canonical Belavin-Polyakov (BP) profile
\begin{align}
  \m_{\infty}(\r) = \left( -\frac{2 \r}{1 + |\r|^2}, 
  \frac{1-|\r|^2}{1+|\r|^2} \right).
\end{align}
This is due to the fact that for $\bar\delta \ll 1$ the energy of a
skyrmion in a single layer gets close to the value of the Dirichlet
energy of harmonic maps with degree +1, whose minimizers are
well-known \cite{belavin75,eells88,bms:arma21}. Furthermore, the
rigidity estimates for almost harmonic maps of degree +1 and a fine
analysis of the tail of the skyrmion profiles yield closeness of the
profiles to those of the form
$\m(\r) = R \m_\infty( (\r - \r_0)/\rho)$, where $R \in SO(3)$ is a
rotation around the $z$-axis \cite{bms:arma21}. As the anisotropy
energy evaluated on $\m_\infty$ diverges, in order to determine the
skyrmion radius $\rho$ and the rotation angle $\theta$ as functions of
$\bar \kappa$ and $\bar \delta$ one needs to consider a truncated BP
profile, whose energy yields the asymptotic dependence of the skyrmion
characteristics on $\bar \kappa, \bar \delta \to 0$ \cite{bms:arma21}.

To that end, we define
\begin{equation}
f(r) = \frac{2r}{1 + r^2}, \qquad r \geq 0,
\end{equation}
and its truncated version
\begin{equation}
f_L(r) = 
\begin{cases}
  f(r) \ , & \text{if } r\leq \sqrt{L}, \\
  \frac{f(\sqrt{L})}{K_1(1/\sqrt{L})} K_1(r/L) \ , & \text{if } r >
  \sqrt{L},
\end{cases}
\end{equation}
where $K_1(x)$ is the modified Bessel function of the second
kind. This choice of the truncation is motivated by the asymptotic
decay of the skyrmion solution at infinity to the leading order in
$\bar \kappa, \bar \delta \ll 1$ determined by the exchange and
anisotropy terms \cite{ivanov90}.  For $L > 1$ we then define
\begin{align}
 \label{eq:mL}
  \m_{\rho, \theta, L, \r_0} (\r) = \left( \ -f_L \left( {
  |\r-\r_0| \over \rho} \right) \frac{R_\theta (\r-\r_0) }{|\r-\r_0|} , \
  \sgn(\rho-|\r-\r_0|)\sqrt{1-f_L^2 \left( {|\r-\r_0| \over \rho} \right)
  } \
  \right),
\end{align}
where $R_\theta \in SO(2)$ is a counter-clockwise rotation by angle
$\theta \in [-\pi, \pi)$.  Using the above expression as an ansatz,
the task of determining the values of $\rho$ and $\theta$ for a
skyrmion amounts to minimizing the energy of
$\m_{\rho, \theta, L, \r_0} $ in $\rho$, $\theta$ and $L$ to the
leading order in $\bar \kappa, \bar \delta \ll 1$.

It is clear that the above considerations should remain valid for
several layers, each containing a single skyrmion. Therefore, we now
consider an asymptotic expansion of the energy $\bar E_N$ for the profiles
of the form (see Fig. \ref{f:ansatz} for an illustration):
\begin{align}
  \label{eq:prof}
  \m_n = \m_{\rho_n, \theta_n, L_n, \r_n}, \qquad n = 1, \ldots, N,
\end{align}
and study the energy landscape of
$\bar E_N(\{ \m_{\rho_n, \theta_n, L_n, \r_n}\})$ in terms of its
dependence on $\{\rho_n, \theta_n, L_n, \r_n\}$ for
$\bar \delta, |\bar\kappa| \ll 1$.  
Notice that despite having reduced
our problem to a finite-dimensional one, we still need to study a
strongly nonlinear, fully coupled system whose analysis is a
significant challenge. In particular, it is not a priori clear whether
the minimum of $\bar E_N(\{ \m_{\rho_n, \theta_n, L_n, \r_n}\})$ is
attained, as it may be energetically favorable for a skyrmion in one
of the layers to collapse, which would correspond to $\rho_n \to 0$,
violating \eqref{eq:degreen} in the limit.

\begin{figure}
  \centering
  \includegraphics[width=10cm]{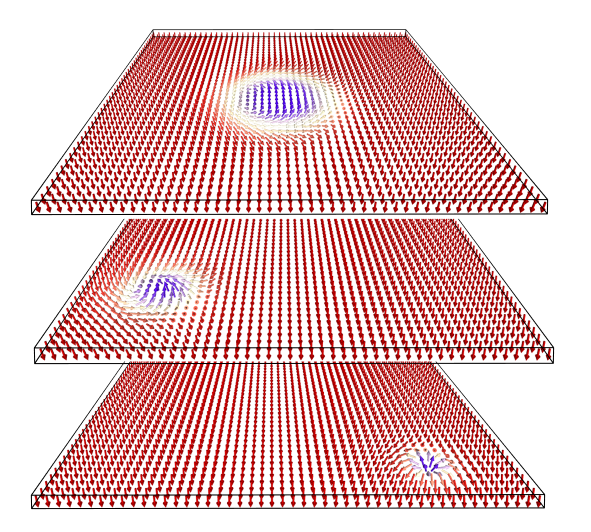}
  \caption{An example of a skyrmion configuration from \eqref{eq:prof}
    with three distinct radii and centers in a ferromagnetic
    trilayer. The skyrmion in the bottom layer is of N\'eel type
    ($\theta_1 = 0$), the skyrmion in the top layer is of Bloch type
    ($\theta_3 = \pi/2$), and the skyrmion in the middle layer is of
    mixed type ($\theta_2 = \pi/4$).}
  \label{f:ansatz}
\end{figure}

For simplicity of notation, we can write
\begin{align}
  \label{eq:ENsum}
  \bar E_N(\{ \m_{\rho_n, \theta_n, L_n, \r_n}\}) = \sum_{n=1}^N \left( 
  \bar E_\mathrm{ex}(\m_n) + \bar E_\mathrm{an}(\m_n) + \bar E_\mathrm{Z}(\m_n) +
  \bar E_\mathrm {DMI}(\m_n) \right)
  \notag \\
  + \sum_{n=1}^{N-1} \sum_{k=n+1}^N \bar E_{vs}(\m_n, \m_k) + \sum_{n=1}^N
  \sum_{k=1}^N \left( \bar E_{vv}(\m_n, \m_k) + \bar E_{ss}(\m_n, \m_k) \right) , 
\end{align}
where each term in the sum corresponds to the respective one in
\eqref{eq:EN}. As was shown in \cite{bms:arma21}, for all
$L_n \geq L_0$, with some $L_0 > 1$ universal, one can carry out an
expansion of each term in \eqref{eq:ENsum} that becomes asymptotically
exact as $L_0 \to \infty$. Utilizing the results from \cite[Lemma
A.6]{bms:arma21} and \cite{bfbsm:prb23}, we obtain that for
$L_0 \gg 1$ we have
\begin{align}
  \bar E_\mathrm{ex} ( \m_{n,\rho_n,\theta_n, L_n } ) - 8\pi
  & \asm 
    \frac{4\pi}{L_n^2}, \\
  \bar E_\mathrm{an}( \m_{n,\rho_n,\theta_n, L_n } )
  & \asm 4\pi \rho_n^2
    \ln\left( \frac{4L_n^2}{e^{2(1+\gamma)}} \right), \\
  \bar E_\mathrm{Z} ( \m_{n,\rho_n,\theta_n, L_n } )
  & \asm -4 \pi \bar h
    \rho_n^2 \ln \left( \frac{4L_n^2}{e^{1+2\gamma}} \right),  \\
  \bar E_\mathrm{DMI} ( \m_{n,\rho_n,\theta_n, L_n } )
  & \asm -8 \pi \bar \kappa
    \rho_n \cos \theta_n.
\end{align}

We next introduce the Fourier transform of
$\m_{r,n} = \m_n + \hat{\mathbf z} \in H^1(\R^2; \R^3)$:
\begin{align}
  \label{eq:mnhat}
  \widehat{\m}_{r,n}(\q) = \int_{\R^2} e^{-i \q \cdot \r} \m_{r,n}(\r) \, d^2 r
\end{align}
and write the stray field-mediated terms as \cite{lieb-loss}
\begin{align}
  \bar E_{vv}(\m_n,\m_k)
  &= \frac{\dbar}{2} \int_{\R^2} {(\q \cdot \overline{\widehat \m_{r,n}^\perp} ) ( \q
    \cdot \widehat \m_{r,k}^\perp )
    \over |\q|} {d^2 q \over (2 \pi)^2} , \\ 
  \bar E_{ss}(\m_n,\m_k)
  &= - \frac{\dbar}{2}  \int_{\mathbb{R}^{2}}
    |\q| \overline{\left( \widehat m^\|_{r,n}\right) } \widehat m^\|_{r,k}
    {d^2 q \over (2 \pi)^2} , \\ 
  \bar E_{vs}(\m_n,\m_k)
  &=- i \bar\delta \int_{\R^2}
    \q \cdot \left( \overline{\widehat \m_{r,n}^\perp}   \widehat m^\|_{r,k} - 
    \overline{\widehat \m_{r,k}^\perp}   \widehat m^\|_{r,n} \right)
    \, {d^2 q \over (2 \pi)^2} , 
\end{align}
where the overline denotes complex conjugate. 

We now assume that the magnetization $\m_n$ takes the form
\eqref{eq:prof}.  Using the fact that
\begin{equation}
  R_{\theta_n} \frac{\r-\r_n}{|\r-\r_n|} = 
  \frac{\r-\r_n}{|\r-\r_n|} \cos \theta_n+ 
  \frac{(\r-\r_n)}{|\r-\r_n|}^\perp \sin \theta_n, 
\end{equation}
where $(\r-\r_n)^\perp$ denotes the 90$^\circ$ counter-clockwise
rotation of $\r-\r_n$, we can split the in-plane component of $\m_n$
into two terms and note that the second term has zero divergence and
will not factor into the volume charge energy. Using the results from
\cite[Lemmas A.5 and A.6]{bms:arma21}, we obtain asymptotically for
$L_0 \gg 1$:
\begin{align}
  \bar E_{vv}(\m_n,\m_k)
  &\asm 2 \dbar \rho_n^2\rho_k^2 \cos\theta_n
    \cos\theta_k \int_{\R^2}  e^{i\q\cdot
    (\r_n-\r_k)} |\q| K_1(\rho_n|\q|)K_1(\rho_k|\q|)
    \, d^2 q,  \\
  \bar E_{ss}(\m_n,\m_k)
  &\asm -2 \dbar \rho_n^2\rho_k^2 \int_{\R^2}  e^{i\q\cdot
    (\r_n-\r_k)} |\q| K_0(\rho_n|\q|)K_0(\rho_k|\q|)  \
    d^2 q, \\ 
  \bar E_{vs}(\m_n,\m_k)
  &\asm -4 \dbar \rho_n^2\rho_k^2 \cos\theta_n\int_{\R^2}
    e^{i\q\cdot (\r_n-\r_k)} |\q| K_1(\rho_n|\q|)K_0(\rho_k|\q|)  \
    d^2 q \notag \\ 
  & \quad + 4  \dbar \rho_n^2\rho_k^2 \cos\theta_k  \int_{\R^2}
    e^{i\q\cdot (\r_k-\r_n)} |\q| K_0(\rho_n|\q|)K_1(\rho_k|\q|)  \
    d^2 q.
\end{align}
Integrating in polar coordinates with $q = |\q|$ and $\varphi$ being
the polar angle between the vectors $\r_{n} -\r_k$ and $\q$, we can
further reduce the above integrals with the help of the well-known
formula
\begin{equation}
  {1 \over 2\pi} \int_0^{2\pi} e^{ i q  |\r_{n} -\r_k| \cos \varphi}
  d\varphi =  
  J_0\left( q  |\r_{n} -\r_k| \right),
\end{equation}
where $J_0(x)$ is the Bessel function of the first kind.  Introducing
the new variables
\begin{equation}
  \alpha = \sqrt{\frac{\rho_k}{\rho_n}} \ , \ \ \beta =
  \sqrt{\rho_n\rho_k} \ , \ \ \lambda = \frac{ |\r_n
    -\r_k|}{\sqrt{\rho_n\rho_k}},
\end{equation}
and the rescaled variable of integration
$\xi = q \sqrt{\rho_n\rho_k}$, we can write the energies above as
\begin{align}
  \bar E_{vv}(\m_n,\m_k)
  &\asm 4 \pi \dbar \beta \cos\theta_n \cos\theta_k
    \int_0^\infty J_0(\lambda \xi) K_1(\alpha
    \xi)K_1(\xi/\alpha)  \ \xi^2 d\xi, \\ 
  \bar E_{ss}(\m_n,\m_k)
  &\asm -4\pi \dbar \beta \int_0^\infty  J_0(\lambda \xi) K_0(\alpha
    \xi)K_0(\xi/\alpha)  \ \xi^2 d\xi,\\ 
  \bar E_{vs}(\m_n,\m_k)
  &\asm -8\pi \dbar \beta \cos\theta_n \int_0^\infty  J_0(\lambda \xi)
    K_0(\alpha \xi)K_1(\xi/\alpha)  \ \xi^2 d\xi \notag \\ 
  &\quad + 8\pi \dbar \beta \cos\theta_k \int_0^\infty  J_0(\lambda
    \xi) K_1(\alpha \xi)K_0(\xi/\alpha)  \ \xi^2 d\xi. 
\end{align}

Now define the functions
\begin{align}
  F_{vv}(\alpha,\lambda)
  &= \frac{32}{3\pi^2} \int_0^\infty \xi^2 J_0(\lambda \xi) K_1(\alpha
    \xi) K_1(\xi/\alpha) d\xi, \label{eq:Fvvdef} \\  
  F_{ss}(\alpha,\lambda)
  &= \frac{32}{\pi^2} \int_0^\infty  \xi^2 J_0(\lambda \xi) K_0(\alpha
    \xi) K_0(\xi/\alpha) d\xi, \label{eq:Fssdef}\\ 
  F_{vs}(\alpha,\lambda)
  &= 2 \int_0^\infty  \xi^2 J_0(\lambda \xi)
    K_0(\alpha \xi) K_1(\xi/\alpha) d\xi, \label{eq:Fvsdef}
\end{align}
normalized so that $F_{vv}(1,0)=F_{ss}(1,0) =F_{vs}(1,0)= 1$. Notice
that all these three functions are uniformly bounded. Indeed, since
$K_{0,1}(t) > 0$ for all $t > 0$ and $|J_0(t)| \leq 1$, we have
$|F_{vv}(\alpha, \lambda)| \leq F_{vv}(\alpha, 0)$,
$|F_{ss}(\alpha, \lambda)| \leq F_{ss}(\alpha, 0)$ and
$|F_{vs}(\alpha, \lambda)| \leq F_{vs}(\alpha, 0)$, with the equality
achieved only for $\lambda = 0$ (see Lemmas \ref{l:Fvvss} and
\ref{l:Fvs} in the Appendix). At the same time, for $0 < \alpha < 1$
there holds
\begin{align}
  F_{vv}(\alpha, 0) & = \frac{16 \alpha \left(\left(\alpha^4+1\right)
                      E\left(1-\alpha^4\right)-2 \alpha^4 
   K\left(1-\alpha^4\right)\right)}{3 \pi
                      \left(\alpha^4-1\right)^2}, \label{eq:Fvv0} \\
  F_{ss}(\alpha, 0) & = \frac{16 \alpha^3
                      \left(\left(\alpha^4+1\right)
                      K\left(1-\alpha^4\right)-2 
   E\left(1-\alpha^4\right)\right)}{\pi
                      \left(\alpha^4-1\right)^2}, \label{eq:Fss0}
\end{align}
where $K(m)$ and $E(m)$ are the complete elliptic integrals of the
first and second kind, respectively.\footnote{We use the convention
  $K(m) = \int_0^{\pi/2} {d \theta \over \sqrt{1 - m \sin^2 \theta}}$
  and $E(m) = \int_0^{\pi/2} \sqrt{1 - m \sin^2 \theta} \, d \theta$.}
For $\alpha > 1$ one can infer the values of the above functions via
identities $F_{vv}(\alpha, 0) = F_{vv}(\alpha^{-1}, 0)$ and
$F_{ss}(\alpha, 0) = F_{ss}(\alpha^{-1}, 0)$. We also can explicitly
compute for any $\alpha > 0$
\begin{align}
  F_{vs}(\alpha, 0) & = \frac{2 \alpha^3 \left(\alpha^4-4 \ln
                      \alpha
                      -1\right)}{\left(\alpha^4-1\right)^2}, \label{eq:Fvs0} 
\end{align}
extending the last expression by continuity to $F_{vs}(\alpha, 0) = 1$
at $\alpha = 1$.  The graphs of the above functions are presented in
Fig. \ref{f:Fvvssvs0}.

\begin{figure}
  \centering
  \includegraphics[width=14cm]{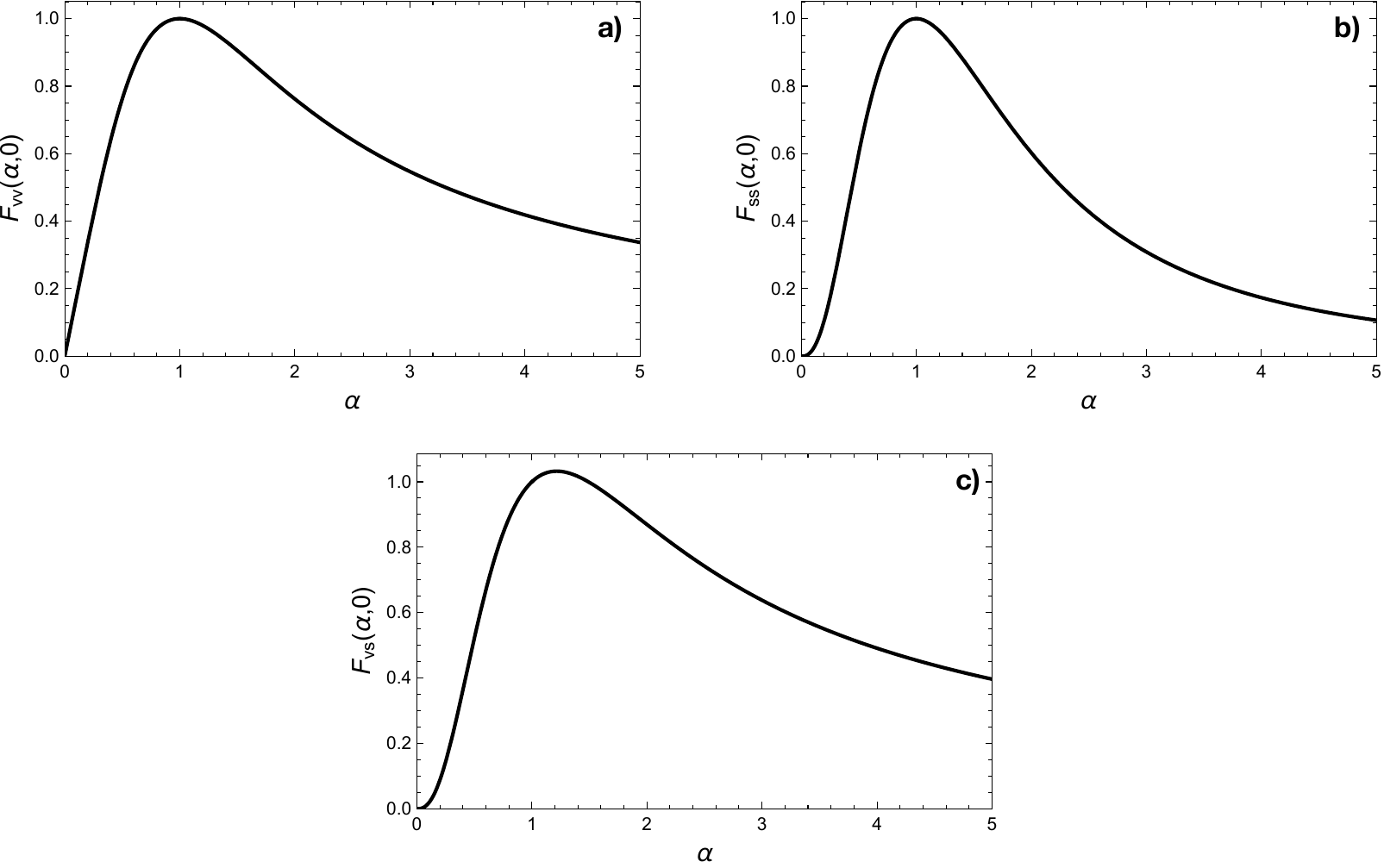}
  \caption{The plots of $F_{vv}(\alpha, 0)$ (a), $F_{ss}(\alpha, 0)$
    (b), and $F_{vs}(\alpha, 0)$ (c).}
  \label{f:Fvvssvs0}
\end{figure}

With these definitions, we have the following representation for the
energies:
\begin{align}
  \bar E_{vv}(\m_n,\m_k)
  &\asm \frac{3\pi^3}{8} \dbar \beta \cos\theta_n \cos\theta_k
    F_{vv}(\alpha,\lambda), \\ 
  \bar E_{ss}(\m_n,\m_k)
  &\asm -\frac{\pi^3}{8} \dbar \beta F_{ss}(\alpha,\lambda),\\ 
  \bar E_{vs}(\m_n,\m_k)
  &\asm -4\pi \dbar \beta \cos\theta_n F_{vs}(\alpha,\lambda) +  4\pi
    \dbar \beta \cos\theta_kF_{vs}(\alpha^{-1},\lambda).
\end{align} 
Coming back to the original set of parameters
$\{\rho_n, L_n,\theta_n,\r_n\}$, it is clear that
\begin{align}
  \bar E_{vv}(\m_{n},\m_{k})
  &\asm \frac{3\pi^3}{8} \dbar \sqrt{\rho_n\rho_k} \cos\theta_n
    \cos\theta_k F_{vv}\left( \sqrt{\frac{\rho_k}{\rho_n}}, \frac{
    |\r_{n} -\r_k|}{\sqrt{\rho_n\rho_k} } \right), \label{eq:Fvv} \\ 
  \bar E_{ss}(\m_{n},\m_{k}) 
  &\asm -\frac{\pi^3}{8} \dbar \sqrt{\rho_n\rho_k} F_{ss} \left(
    \sqrt{\frac{\rho_k}{\rho_n} },\frac{  |\r_{n} -\r_k| }{
    \sqrt{\rho_n\rho_k} } \right), \label{eq:Fss}\\ 
  \bar E_{vs}(\m_n,\m_k)
  &\asm -4\pi \dbar \sqrt{\rho_n\rho_k} \cos\theta_n
    F_{vs}\left(\sqrt{\frac{\rho_k}{\rho_n}},\frac{  |\r_{n} -\r_k| }{
    \sqrt{\rho_n\rho_k} }\right) \notag \\ 
  & \quad +  4\pi \dbar \sqrt{\rho_n\rho_k} \cos\theta_k
    F_{vs}\left(\sqrt{\frac{\rho_n}{\rho_k}},\frac{  |\r_{n} -\r_k| }{
    \sqrt{\rho_n\rho_k} }\right), \label{eq:Fvs}
\end{align} 
and from the definitions of $F_{vv}$ and $F_{ss}$ we observe that
\begin{align}
  \bar E_{vv} ( \m_{n}, \m_n )
  &\asm \frac{3\pi^3}{8} \dbar \rho_n
    \cos^2\theta_n ,\\  
  \bar E_{ss} ( \m_{n},\m_n)
  &\asm - \frac{\pi^3}{8} \dbar \rho_n.
\end{align}
Using the above expressions, we may asymptotically express the full
energy, to leading order in $L_0 \gg 1$ as
$\bar E_N(\{\rho_n, \theta_n, L_n, \r_n\}) - 8 \pi N \simeq
F_N(\{\rho_n, \theta_n, L_n, \r_n\})$, where
\begin{equation}
\begin{aligned}
  F_N(& \{\rho_n, \theta_n, L_n, \r_n\}) \\
  & = \sum_{n=1}^N \left[ \frac{4\pi}{L_n^2} + 4\pi \rho_n^2 \ln\left(
      \frac{4L_n^2}{e^{2(1+\gamma)}} \right) -4 \pi \bar h \rho_n^2
    \ln \left( \frac{4L_n^2}{e^{1+2\gamma}} \right) -8 \pi \bar \kappa
    \rho_n \cos \theta_n + \dbar
    \frac{\pi^3}{8} \rho_n (3\cos^2\theta_n  - 1) \right]  \\
  & + \sum_{n=1}^{N-1} \sum_{k=n+1}^N \dbar \frac{\pi^3}{4}
  \sqrt{\rho_n \rho_k} \left[ 3\cos\theta_n \cos\theta_k F_{vv} \left(
      \sqrt{\frac{\rho_k}{\rho_n} },\frac{ |\r_{n} -\r_k| }{
        \sqrt{\rho_n\rho_k} } \right) - F_{ss} \left(
      \sqrt{\frac{\rho_k}{\rho_n} },\frac{ |\r_{n} -\r_k| }{
        \sqrt{\rho_n\rho_k} } \right) \right] \\
  &-\sum_{n=1}^{N-1} \sum_{k=n+1}^N 4\pi \dbar \sqrt{\rho_n\rho_k}
  \left[ \cos\theta_n F_{vs}\left(\sqrt{\frac{\rho_k}{\rho_n}},\frac{
        |\r_{n} -\r_k| }{ \sqrt{\rho_n\rho_k} }\right) - \cos\theta_k
    F_{vs}\left(\sqrt{\frac{\rho_n}{\rho_k}},\frac{ |\r_{n} -\r_k| }{
        \sqrt{\rho_n\rho_k} }\right) \right].
\end{aligned}
\end{equation}
This is the reduced energy function that determines the energy
landscape of an $N$-skyrmion stack in terms of the skyrmion
parameters.



\section{The landscape of the energy function $F_N$}
\label{s:landscape}
We now investigate the energy landscape governed by the function $F_N$
in the regime of its applicability to stacked magnetic skyrmions in
ferromagnetic multilayers. To simplify the discussion, from now on we
assume that there is no applied external magnetic field, $\bar h = 0$,
and omit the Zeeman contribution to the energy from now on. The
analysis below can be easily extendable to the case $\bar h < 0$, when
the field is applied opposite to the magnetization direction in the
skyrmion core. At the same time, for fields applied along the
magnetization in the core, $\bar h > 0$, the energy landscape becomes
more complex due to skyrmion bursting \cite{bfbsm:prb23}, a new
phenomenon whose study goes beyond the present paper.

As we are interested in the minimization of $F_N$, we introduce
several auxiliary functions which are obtained by partial
minimization. With a slight abuse of notation, we still utilize the
symbol $F_N$ to denote those functions, which now depend on fewer
variables. For example, we introduce
\begin{equation}
\label{energy01}
\begin{aligned}
  F_N (\{\rho_n,\theta_n,\r_n\})=\min_{L_n > 0} F_N(\{\rho_n,
  \theta_n, L_n, \r_n\}),
\end{aligned}
\end{equation}
which is obtained by minimizing the energy function in all $L_n$. An
explicit calculation shows that $F_N(\{\rho_n, \theta_n, L_n, \r_n\})$
is minimized for $L_n = \rho_n^{-1}$, which is also the unique
critical point of $F_N(\{\rho_n, \theta_n, L_n, \r_n\})$ in $L_n$,
resulting in
\begin{equation}\label{energy02}
  \begin{aligned}
    F_N(& \{\rho_n, \theta_n,\r_n\}) \\
    & = \sum_{n=1}^N \left[ - 4\pi \rho_n^2 \ln\left(
        \frac{e^{1+2\gamma}}{4} \rho_n^2 \right) -8 \pi \bar \kappa
      \rho_n \cos \theta_n + \dbar
      \frac{\pi^3}{8} \rho_n (3\cos^2\theta_n  - 1) \right]  \\
    & + \sum_{n=1}^{N-1} \sum_{k=n+1}^N \dbar \frac{\pi^3}{4}
    \sqrt{\rho_n \rho_k} \left[ 3\cos\theta_n \cos\theta_k F_{vv}
      \left( \sqrt{\frac{\rho_k}{\rho_n} },\frac{ |\r_{n} -\r_k| }{
          \sqrt{\rho_n\rho_k} } \right) - F_{ss} \left(
        \sqrt{\frac{\rho_k}{\rho_n} },\frac{ |\r_{n} -\r_k| }{
          \sqrt{\rho_n\rho_k} } \right) \right] \\
    &-\sum_{n=1}^{N-1} \sum_{k=n+1}^N 4\pi \dbar \sqrt{\rho_n\rho_k}
    \left[ \cos\theta_n
      F_{vs}\left(\sqrt{\frac{\rho_k}{\rho_n}},\frac{ |\r_{n} -\r_k|
        }{ \sqrt{\rho_n\rho_k} }\right) - \cos\theta_k
      F_{vs}\left(\sqrt{\frac{\rho_n}{\rho_k}},\frac{ |\r_{n} -\r_k|
        }{ \sqrt{\rho_n\rho_k} }\right) \right].
\end{aligned}
\end{equation}
Note, however, that this value of $L_n$ is admissible only when
$L_n > L_0$, where $L_0 \gg 1$ is the parameter that measures the
applicability of the energy function $F_N$ to the system of $N$
stacked skyrmions (see section \ref{s:Nskyr}). This means that for
consistency we need to restrict the admissible values of $\rho_n$ to
$\rho_n < L_0^{-1}$ for a fixed $L_0 > 0$ sufficiently large. We thus
define, for $\{\r_n\}_{n=1}^N \subset \R^2$, an admissible class
\begin{align}
  \label{eq:ANrn}
  \mathcal A_N(\{\r_n\}) = \left\{ \rho_n \in ( 0, L_0^{-1}),  \ \theta_n
  \in [-\pi , \pi),  \ \r_n \right\}_{n=1}^N.
\end{align} 
Without loss of generality, from now on we may assume that
$L_0 > \bar L_0$, where
\begin{align}
  \label{eq:L0bar}
  \bar L_0 = \frac12 e^{2+\gamma},
\end{align}
which ensures that the first term in the first line of the right-hand
side of \eqref{energy02} is strictly convex in $\rho_n$.

We next investigate the energy $F_N(\{\r_n\})$ obtained by fixing the
positions of the skyrmions and minimizing with respect to angles and
radii. We note that as the admissible set of the radii
$0 < \rho_n < L_0^{-1}$ is not closed, the existence of solutions for
this minimization problem is not a priori clear, since some skyrmions
may prefer to collapse, $\rho_n \to 0$, or burst,
$\rho_n \to L_0^{-1}$, in the course of minimization.  As a first
step, we prove that the minimization of
$F_N(\{\rho_n,\theta_n,\r_n\})$ in $\theta_n$ and $\rho_n$ is well
defined, i.e., that minimizers of this problem exist. The obtained
minimal energy can be used to understand the interaction between
magnetic skyrmions at different locations. It also allows to estimate
from below the energy of the saddle points that separate the basins of
attraction of different skyrmion spatial arrangements. We will
subsequently illustrate the explicit solution of this problem in the
simplest case of stray field-coupled ferromagnetic bilayers with no
DMI.

\begin{theorem}
  \label{t:FN}
  Let $N \in \mathbb N$ and $L_0 > \bar L_0$. There exists
  $\bar\delta_0>0$ such that for any fixed $\{ \r_n \} \subset \R^2$
  and all $\dbar, |\bar \kappa| < \bar \delta_0$ there exists a
  minimizer of the problem
\begin{align}
  F_N(\{ \r_n \}) = \min_{\mathcal A_N(\{ \r_n \})}F_N(\{ \rho_n,
  \theta_n, \r_n\}). 
\end{align}
\end{theorem}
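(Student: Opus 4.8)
The plan is to apply the direct method after compactifying the radius variables. First I would note that $F_N$ is bounded below on $\mathcal A_N(\{\r_n\})$: the local self-energy, the diagonal dipolar and the DMI terms are bounded because $0 < \rho_n < L_0^{-1}$, while each stray-field interaction term is controlled by $\dbar\sqrt{\rho_n\rho_k}\,\sup|F_{vv/ss/vs}| \le \dbar L_0^{-1} C_F$, using the uniform boundedness of $F_{vv}$, $F_{ss}$, $F_{vs}$. Next I would extend $F_N$ continuously to the compact set $\overline{\mathcal A_N}=\{\rho_n\in[0,L_0^{-1}],\ \theta_n\in[-\pi,\pi],\ \r_n\}$ (the dependence on each $\theta_n$ being continuous and $2\pi$-periodic), assigning to a layer with $\rho_m=0$ the value obtained by dropping all terms that contain $\rho_m$. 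This is legitimate because, as $\rho_m\to0$ with $\rho_k$ fixed, the argument $\alpha=\sqrt{\rho_k/\rho_m}$ (or its reciprocal) tends to $\infty$ (or $0$), and from \eqref{eq:Fvv0}--\eqref{eq:Fvs0} one checks $F_{vv}(\alpha,0),F_{ss}(\alpha,0),F_{vs}(\alpha,0)\to0$; together with the bound $|F(\alpha,\lambda)|\le F(\alpha,0)$ and the prefactor $\sqrt{\rho_n\rho_k}$, every interaction term involving layer $m$ vanishes in the limit. Hence $F_N$ is continuous on the compact set $\overline{\mathcal A_N}$ and attains its minimum there, and it remains only to show that no minimizer has any $\rho_m\in\{0,L_0^{-1}\}$, so that it lies in the interior $\mathcal A_N(\{\r_n\})$.

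The main obstacle is ruling out collapse, $\rho_m=0$. The difficulty flagged in the introduction is genuine: on $(0,L_0^{-1})$ the self-energy $-4\pi\rho_m^2\ln(\frac{e^{1+2\gamma}}{4}\rho_m^2)$ is increasing with limit $0$ as $\rho_m\to0$, so by itself it favors collapse. The resolution is to produce a strictly better competitor by inserting into layer $m$ a small \emph{Bloch} skyrmion, $\theta_m=\pi/2$, $\rho_m=\varepsilon$, leaving all other layers untouched. With $\cos\theta_m=0$ the DMI term and the volume--volume cross terms (which carry a factor $\cos\theta_m$) vanish identically, while the surviving $ss$ and $vs$ cross terms are controlled via $|F|\le F(\cdot,0)$: the relevant arguments are $\sqrt{\rho_k/\varepsilon}\to\infty$ and $\sqrt{\varepsilon/\rho_k}\to0$, at which $F_{ss}$ and $F_{vs}$ decay like $(\varepsilon/\rho_k)^{3/2}\ln(\rho_k/\varepsilon)$, so that after multiplication by $\sqrt{\varepsilon\rho_k}$ the combined cross contribution is $O(\varepsilon^2\ln(1/\varepsilon))$. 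The self exchange--anisotropy contribution is likewise $O(\varepsilon^2\ln(1/\varepsilon))$. Thus the only term linear in $\varepsilon$ is the diagonal dipolar term $\dbar\frac{\pi^3}{8}\varepsilon(3\cos^2\theta_m-1)=-\frac{\pi^3}{8}\dbar\,\varepsilon$, and the marginal energy of the inserted skyrmion is $-\frac{\pi^3}{8}\dbar\,\varepsilon+O(\varepsilon^2\ln(1/\varepsilon))<0$ for $\varepsilon$ small and $\dbar>0$. This strictly lowers the energy, contradicting minimality, so $\rho_m>0$ at any minimizer.

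It remains to rule out bursting, $\rho_m=L_0^{-1}$, which is where the smallness of $\dbar$ and $\bar\kappa$ enters. I would differentiate $F_N$ in $\rho_m$ at $\rho_m=L_0^{-1}$. The self-energy contributes $8\pi L_0^{-1}\big(2\ln L_0-1-\ln\frac{e^{1+2\gamma}}{4}\big)$, a fixed strictly positive number for $L_0>\bar L_0$ (precisely the regime of \eqref{eq:L0bar} in which the self-energy is strictly convex and still increasing at the right endpoint). All the remaining terms --- DMI, diagonal dipolar, and the stray-field interactions, which are smooth in $\rho_m$ away from $\rho_m=0$ --- have $\rho_m$-derivatives bounded by $C(N,L_0)(\dbar+|\bar\kappa|)$. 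Choosing $\bar\delta_0$ small enough that $2C(N,L_0)\bar\delta_0<8\pi L_0^{-1}(2\ln L_0-1-\ln\frac{e^{1+2\gamma}}{4})$ forces $\partial_{\rho_m}F_N(L_0^{-1})>0$, so $F_N$ strictly decreases as $\rho_m$ is pushed inward, contradicting minimality.

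Combining the three steps, the minimizer of $F_N$ over $\overline{\mathcal A_N}$ has every $\rho_m\in(0,L_0^{-1})$ and therefore belongs to $\mathcal A_N(\{\r_n\})$, which proves the claim. I expect the collapse step to be the crux: one must identify the correct competitor --- a Bloch rather than a N\'eel skyrmion, whose diagonal dipolar self-energy is negative and linear in the radius --- and verify that for this choice every cross term decays fast enough (thanks to the decay of $F_{vv}$, $F_{ss}$, $F_{vs}$ at degenerate ratios $\alpha$) to leave the favorable linear term $-\frac{\pi^3}{8}\dbar\,\varepsilon$ dominant. The bursting step and the boundedness/continuity steps are comparatively routine.
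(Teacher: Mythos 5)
Your strategy coincides with the paper's on the decisive point: collapse is ruled out by inserting a small \emph{Bloch} skyrmion ($\theta_m=\pi/2$, $\rho_m=\varepsilon$) into the degenerate layer, so that the DMI and volume--volume cross terms vanish identically, the diagonal dipolar term supplies the negative linear contribution $-\tfrac{\pi^3}{8}\dbar\,\varepsilon$, and the surviving $ss$/$vs$ cross terms are $O(\varepsilon^2\ln(1/\varepsilon))$; this is exactly Step~3 of the paper (your ``marginal energy of the inserted skyrmion'' is the paper's function $G$, up to sign). The packaging differs in two minor ways: the paper runs the direct method on minimizing sequences over the open set, while you compactify $\rho_n\in[0,L_0^{-1}]$ and extend $F_N$ continuously (which is legitimate -- uniform boundedness of $F_{vv},F_{ss},F_{vs}$ alone already gives the continuity, the $\alpha$-decay you cite is not even needed there); and your cross-term estimates use $|F(\alpha,\lambda)|\le F(\alpha,0)$ together with the $\alpha\to0$ asymptotics of the closed forms \eqref{eq:Fss0}, \eqref{eq:Fvs0}, whereas the paper estimates the defining integrals directly after the substitution $t=\xi\lambda$, which forces it to assume $\r_m\neq\r_k$ and dismiss the concentric case as ``simpler.'' Your variant handles both cases uniformly, a small but genuine improvement.

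The one soft spot is your bursting step. You assert that the stray-field interaction terms have $\rho_m$-derivatives bounded by $C(N,L_0)(\dbar+|\bar\kappa|)$ uniformly over the admissible set, and call this routine. Writing an interaction term as $\sqrt{\rho_m\rho_k}\,\Phi(\alpha,\lambda)$ with $\alpha=\sqrt{\rho_k/\rho_m}$ and $\lambda=|\r_m-\r_k|/\sqrt{\rho_m\rho_k}$, one computes
\begin{align}
  \partial_{\rho_m}\!\left[\sqrt{\rho_m\rho_k}\,\Phi(\alpha,\lambda)\right]
  = \frac{\alpha}{2}\left(\Phi - \alpha\,\partial_\alpha\Phi -
  \lambda\,\partial_\lambda\Phi\right),
\end{align}
so your claim requires $\sup_{\alpha,\lambda}\left(|\alpha\,\partial_\alpha\Phi|+|\lambda\,\partial_\lambda\Phi|\right)<\infty$ for $\Phi\in\{F_{vv},F_{ss},F_{vs}\}$. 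Nothing in the paper supports this: Lemmas \ref{l:Fvvss} and \ref{l:Fvs} bound only the functions themselves, and $\lambda\,\partial_\lambda\Phi$ is an oscillatory integral against $\lambda\xi^3 J_1(\lambda\xi)$ whose uniform boundedness as $\lambda\to\infty$ needs a genuine decay argument for Hankel-type transforms (the crude bound $|J_1|\le 1$ gives growth in $\lambda$, not boundedness). The claim is true, but proving it is comparable in difficulty to the appendix lemmas, not ``routine.'' The gap is easily repaired by replacing the derivative comparison with the value comparison the paper uses in its Step~4: at $\rho_m=L_0^{-1}$ the self-energy term alone is at least $12\pi L_0^{-2}$ (this positivity is precisely what $L_0>\bar L_0$ in \eqref{eq:L0bar} buys), while every $\dbar$- or $\bar\kappa$-dependent term involving layer $m$ is bounded in absolute value by $C(N)(\dbar+|\bar\kappa|)L_0^{-1}$, using only $|F_{vv}|,|F_{ss}|\le 1$ and $|F_{vs}|\le 2$; hence for $\dbar,|\bar\kappa|<\dbar_0(N,L_0)$ the energy at $\rho_m=L_0^{-1}$ strictly exceeds its value at $\rho_m=0$ (same remaining coordinates), contradicting minimality on your compactified domain. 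With that substitution your proof is complete and, in substance, identical to the paper's.
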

\begin{proof}
  \noindent {\bf Step 1.}  We first minimize in angles $\theta_n$,
  keeping $\{\rho_n,\r_n\}$ fixed, with $1\leq n \leq N$. It is clear
  that if we fix $\r_n \in \R^2$ and $\rho_n \in (0, L_0^{-1})$, the
  function $F_N(\{\rho_n, \theta_n, \r_n\})$ is continuous and
  periodic in $\theta_n$ with period $2\pi$ and, therefore, there
  exists a global minimizer $\{ \theta_n^* \} \subset
  [-\pi,\pi)^N$. We define the resulting minimal energy as
  $ F_N(\{\rho_n,\r_n\}) =\min_{\theta_n} F_N(\{ \rho_n, \theta_n,
  \r_n\})$, where
\begin{equation}\label{energy0222}
\begin{aligned}
  F_N(\{\rho_n,\r_n\}) &= \sum_{n=1}^N \left[ - 4\pi \rho_n^2
    \ln\left( \frac{e^{1+2\gamma}}{4} \rho_n^2 \right) -8 \pi \bar
    \kappa \rho_n \cos \theta_n^* + \dbar
    \frac{\pi^3}{8} \rho_n (3\cos^2\theta_n^*  - 1) \right]  \\
  &+ \sum_{n=1}^{N-1} \sum_{k=n+1}^N \dbar \frac{\pi^3}{4}
  \sqrt{\rho_n \rho_k} \left[ 3\cos\theta_n^* \cos\theta_k^* F_{vv}
    \left( \sqrt{\frac{\rho_k}{\rho_n} },\frac{ |\r_{n} -\r_k|}{
        \sqrt{\rho_n\rho_k} } \right) - F_{ss} \left(
      \sqrt{\frac{\rho_k}{\rho_n} },\frac{ |\r_{n} -\r_k| }{
        \sqrt{\rho_n\rho_k} } \right) \right] \\
  &-\sum_{n=1}^{N-1} \sum_{k=n+1}^N 4\pi \dbar \sqrt{\rho_n\rho_k}
  \left[ \cos\theta_n^*
    F_{vs}\left(\sqrt{\frac{\rho_k}{\rho_n}},\frac{ |\r_{n} -\r_k| }{
        \sqrt{\rho_n\rho_k} }\right) - \cos\theta_k^*
    F_{vs}\left(\sqrt{\frac{\rho_n}{\rho_k}},\frac{ |\r_{n} -\r_k| }{
        \sqrt{\rho_n\rho_k} }\right) \right].
\end{aligned}
\end{equation}

\noindent {\bf Step 2.} Now we show that $ F_N(\{\rho_n,\r_n\})$ is
bounded from below. From Lemmas \ref{l:Fvvss} and \ref{l:Fvs} in the
Appendix, we know that $F_{ss}(\alpha,\lambda)$,
$F_{vv}(\alpha,\lambda)$, and $F_{vs}(\alpha,\lambda)$ are bounded
functions. Then since $2\sqrt{\rho_n \rho_k} \leq \rho_n +\rho_k$, it
follows that for some $C>0$ universal we have
\begin{equation}\label{energy02221}
\begin{aligned}
  F_N(\{\rho_n,\r_n\}) &\geq \sum_{n=1}^N \left[ - 4\pi \rho_n^2
    \ln\left( \frac{e^{1+2\gamma}}{4} \rho_n^2 \right) - C(|\bar
    \kappa| + \dbar)
    \rho_n \right] \geq -C_1
\end{aligned}
\end{equation}
for some $C_1 > 0$ and all $0<\rho_n < L_0^{-1}$.

Therefore, for $1 \leq n \leq N$ there are minimizing sequences
$(\rho_{n,l})_l$ such that as $l \to \infty$
\begin{align}
 F_N(\{\rho_{n,l},\r_n\}) \to \inf_{\rho_n} F_N(\{\rho_n,\r_n\}) > -\infty.
\end{align}
Since $\rho_{n,l} \in \left(0, L_0^{-1} \right)$, up to extraction of
a subsequence (not relabeled) they converge:
\begin{align}
  \lim_{l \to \infty} \rho_{n,l} = \rho^*_n \in [0, L_0^{-1}].
\end{align}

\noindent {\bf Step 3.} We now show that in a minimizing sequence
$\rho_{n,l}$ do not converge to $0$. Assume this is not true and,
without loss of generality, there is a minimizing sequence with
$\rho_{N,l} \to 0$, while for $1 \leq n \leq N-1$ we have
$\rho_{n,l} \to \rho_n^* \geq 0$ as $l \to \infty$. Let us assume
$\rho_n^*>0$ for all $1\leq n \leq N-1$, as the other case is simpler
and will follow in a similar fashion. We also assume that
$\r_N \neq \r_n$ for all $1 \leq n \leq N-1$, as the other case is
also simpler. We now observe that the infimum of the energy is
achieved on $\{ \rho_n^* \}_{n=1}^{N-1} \cup \{0\}$:
\begin{equation}
\begin{aligned}
  \inf_{\rho_n} & F_N (\{\rho_n,\r_n\}) = \sum_{n=1}^{N-1} \left[ -
    4\pi (\rho_n^*)^2 \ln\left( \frac{e^{1+2\gamma}}{4} (\rho_n^*)^2
    \right) -8 \pi \bar \kappa \rho_n^* \cos \theta_n^* + \dbar
    \frac{\pi^3}{8} \rho_n^* (3\cos^2\theta_n^* - 1)
  \right]  \\
  &+ \sum_{n=1}^{N-2} \sum_{k=n+1}^{N-1} \dbar \frac{\pi^3}{4}
  \sqrt{\rho_n^* \rho_k^*} \left[ 3\cos\theta_n^* \cos\theta_k^*
    F_{vv} \left( \sqrt{\frac{\rho_k^*}{\rho_n^*} },\frac{ |\r_{n}
        -\r_k|}{ \sqrt{\rho_n^*\rho_k^*} } \right) - F_{ss} \left(
      \sqrt{\frac{\rho_k^*}{\rho_n^*} },\frac{ |\r_{n} -\r_k| }{
        \sqrt{\rho_n^*\rho_k^*} } \right) \right] \\
  &-\sum_{n=1}^{N-2} \sum_{k=n+1}^{N-1} 4\pi \dbar
  \sqrt{\rho_n^*\rho_k^*} \left[ \cos\theta_n^*
    F_{vs}\left(\sqrt{\frac{\rho_k^*}{\rho_n^*}},\frac{ |\r_{n} -\r_k|
      }{ \sqrt{\rho_n^*\rho_k^*} }\right) - \cos\theta_k^*
    F_{vs}\left(\sqrt{\frac{\rho_n^*}{\rho_k^*}},\frac{ |\r_{n} -\r_k|
      }{ \sqrt{\rho_n^*\rho_k^*} }\right) \right] \\
  & = F_{N-1}(\{ \rho_n^*, \r_n\}). \label{eq:rhostar}
\end{aligned}
\end{equation}
Therefore, we only need to show that there exists $\rho_N \neq 0$ such
that with $\rho_n = \rho_n^*$ for $1 \leq n \leq N - 1$ we have
\begin{align}
  F_N(\{\rho_n,\r_n\}) < F_{N-1}(\{\rho_n^*,\r_n\}).
\end{align}
Since $F_N(\{\rho_n,\r_n\})$ is obtained by minimizing in $\theta_n$,
from \eqref{eq:rhostar} we know that by taking $\theta_n=\theta_n^*$
for $1\leq n\leq N-1$ and $\theta_N= \frac{\pi}{2}$ we have
\begin{align}
  F_N(\{\rho_n, \r_n\})
  & \leq  F_N(\{\rho_n,  \theta_n, \r_n\}) 
    = F_{N-1}(\{\rho_n^*,\r_n\})
    -G(\{\rho_n, \r_n \}), 
\end{align}
where
\begin{align}
  G(\{\rho_n, \r_n \})
  &=4\pi \rho_N^2 \ln\left( \frac{e^{1+2\gamma}}{4} \rho_N^2
    \right)+\dbar \frac{\pi^3}{8} \rho_N  + \dbar \frac{\pi^3}{4}
    \sum_{n=1}^{N-1} \sqrt{\rho_n \rho_N}  F_{ss} \left(
    \sqrt{\frac{\rho_N}{\rho_n} },\frac{ |\r_N -\r_n| }{
    \sqrt{\rho_N\rho_n} } \right) \\ 
  & + 4\pi \dbar \sum_{n=1}^{N-1}   \sqrt{\rho_n\rho_N}
    \cos\theta_n^* F_{vs}\left(\sqrt{\frac{\rho_N}{\rho_n}},\frac{
    |\r_N -\r_n| }{ \sqrt{\rho_n\rho_N} }\right)  . 
\end{align}
We now need to show that $G(\{\rho_n, \r_n \})>0$ as then we will have
a contradiction with minimality. We will take $\rho_N>0$ small and
investigate $ F_{ss}(\alpha,\lambda) = F_{ss} (\alpha^{-1}, \lambda)$
defined in \eqref{eq:Fssdef}, where
$\alpha =\sqrt{\frac{\rho_N}{\rho_n} }$,
$\lambda=\frac{ |\r_N -\r_n| }{ \sqrt{\rho_N\rho_n} } $.  We can use
the change of variables $t= \xi \lambda$ to obtain
\begin{align}
  F_{ss}(\alpha,\lambda) = \frac{32}{\pi^2 \lambda^3}  \int_0^\infty
  t^2 J_0(t) K_0(\alpha t/\lambda) K_0(t/(\alpha \lambda)) dt.
\end{align}
Next we observe that for $\rho_N \ll 1$ the value of
$\beta={\alpha \over \lambda} =\frac{\rho_N}{|\r_N-\r_n|}>0$ is small,
while $\frac{1}{ \alpha \lambda} = \frac{\rho_n}{|\r_N-\r_n|}$ is
fixed. We have estimates on the Bessel function:
\begin{align}
0 < K_0(t) < h(t)=\max\{-\ln t +1, 1\}, \qquad \forall t > 0.
\end{align}
It is now clear that with the help of $|J_0(t)| \leq 1$ we can
estimate
\begin{align}
  \left|\int_0^\infty  t^2 J_0(t) K_0(\alpha t/\lambda) K_0(t/(\alpha
  \lambda)) dt \right| \leq \int_0^\infty  t^2 K_0( t/(\alpha
  \lambda)) h(\beta t) dt \notag \\
  = \ln \beta^{-1} \int_0^{\beta^{-1}} t^2 K_0(t/(\alpha \lambda)) dt
  + \int_0^{\beta^{-1}}  t^2 K_0( t/(\alpha
  \lambda)) |\ln t| dt  \\
  + \int_0^\infty  t^2 K_0( t/(\alpha
  \lambda)) dt \leq C|\ln \rho_N|, \notag
\end{align}
for some $C = C(\rho_n,|\r_N-\r_n|) > 0$ and all $\rho_N > 0$ small
enough. Since
$\lambda^{-3} = \left(\frac{\sqrt{\rho_N \rho_n}}{|\r_N-\r_n|}
\right)^3$, we obtain
\begin{align}
  \dbar \frac{\pi^3}{4}  \sum_{n=1}^{N-1} \sqrt{\rho_N \rho_n}  F_{ss}
  \left( \sqrt{\frac{\rho_n}{\rho_N} },\frac{ |\r_N -\r_n| }{
  \sqrt{\rho_N\rho_n} } \right) \geq -C \dbar \rho_N^2 |\ln \rho_N| 
\end{align}
for some $C = C(\rho_n,|\r_N-\r_n|) > 0$ and all $\rho_N>0$ small
enough.

We now use the same approach to estimate $F_{vs}$ defined in
\eqref{eq:Fvsdef}:
\begin{align}
  F_{vs}(\alpha,\lambda)
  &= 2 \int_0^\infty  k^2 J_0(\lambda k)
    K_0(\alpha k) K_1(k/\alpha) dk = \frac{2}{
    \lambda^3}  \int_0^\infty  t^2 J_0(t)
    K_0(\alpha t/\lambda) K_1(t/(\alpha
    \lambda)) dt. 
\end{align}
Using the fact that
\begin{align}
  |F_{vs}(\alpha,\lambda)| \leq \frac{2}{ \lambda^3}  \int_0^\infty
  t^2 K_1( t/(\alpha \lambda)) h(\beta t) dt \leq
  \frac{C |\ln \rho_N|}{\lambda^3} ,
\end{align}
where $C = C(\rho_n,|\r_N-\r_n|) > 0$, for all $\rho_N > 0$
sufficiently small, we deduce
\begin{align}
  4\pi \dbar \sum_{n=1}^{N-1}   \sqrt{\rho_n\rho_N}
  \cos\theta_n^* F_{vs}\left(\sqrt{\frac{\rho_N}{\rho_n}},\frac{
  |\r_{n} -\r_N| }{ \sqrt{\rho_n\rho_N} }\right) \geq -C \dbar
  \rho_N^2 |\ln \rho_N|.  
\end{align}
It follows that 
\begin{align}
  G(\{\rho_n, \r_n \})
  &\geq 4\pi \rho_N^2 \ln\left(
    \frac{e^{1+2\gamma}}{4} \rho_N^2
    \right)+\dbar \frac{\pi^3}{8} \rho_N  -C \dbar
    \rho_N^2 |\ln \rho_N| >0 
\end{align}
for any $\dbar>0$ and all $\rho_N>0$ small enough.  \vskip 0.2cm

\noindent {\bf Step 4.} It remains to show that $\rho_n^* < L_0^{-1}$
with $L_0 > \bar L_0$ for all $1 \leq n \leq N$. Assume this is not
true and, without loss of generality, that $\rho_N^* =L_0^{-1}$. In
this case it is not difficult to show that for all
$\bar \delta < \bar \delta_0$ with some $\dbar_0 > 0$ small enough
depending only on $N$ and $L_0$ we have
\begin{align}
  \inf_{\rho_n} F(\{ \rho_n, \r_n\}) > \lim_{l \to 0} F(\{ \rho_{n,l},
  \r_n\}), 
\end{align}
where $\rho_{n,l} = \rho^*_n$ for all $1 \leq n \leq N - 1$ and
$\rho_{N,l} \to 0$ as $l \to \infty$.  Therefore,
$\rho_{N} = L_0^{-1}$ cannot be a minimizer and
$\rho^*_n \in (0,L_0^{-1})$ for all $1 \leq n \leq N$. Finally, as the
function $F_N (\{\rho_n, \r_n\})$ is continuous for
$\rho_n \in (0, L_0^{-1})$, the minimum is attained in
$\mathcal A_N(\{\r_n\})$ for all $\bar \delta < \bar \delta_0$.
\end{proof}

Having established existence of minimizers of
$F_N(\{ \rho_n, \theta_n, \r_n\})$ over $\mathcal A_N(\{\r_n\})$, it
would next be interesting to understand the nature of the minimizers
of $F_N(\{\r_n\})$ with respect to the positions $\r_n$ of the
skyrmions in each layer. This, however, is in general a daunting task,
as the interaction energy describes a fully coupled, strongly
nonlinear system, in which the dependence on positions arises, in
addition to the direct interaction term, due to the implicit and a
priori unknown dependence of the skyrmion radii $\rho_n$ and angles
$\theta_n$ on the skyrmion positions $\{\r_n\}$. In particular, it is
not a priori clear whether minimizers of $F_N(\{\r_n\})$ over the
positions exist, as the interaction between different skyrmions could
be repulsive, leading to failure of compactness of the minimizing
sequences and, as a consequence, to failure of existence of
minimizers.

\section{Application to the case of bilayers in the absence of DMI}
\label{s:bilayer}

Instead of treating the problem in its full generality, in the
remainder of this paper we focus on the simplest particular case of
stray field-coupled ferromagnetic bilayers in the absence of DMI. Here
we can obtain a complete characterization of the energy minimizers. We
show that minimizers indeed exist, signifying an attractive
interaction of the skyrmions in the adjacent layers. We find that the
energy minimizers consist of pairs of concentric identical skyrmions
of {\em N\'eel type}, except that their in-plane magnetizations are
anti-parallel. Furthermore, these skyrmion pairs are chiral, as the
sense of the magnetization rotation in each of the layers is fixed by
the direction of the magnetization at infinity. This is in contrast
with the case of ferromagnetic monolayers, which are known to support
stray field-stabilized skyrmions of {\em Bloch type} that can have two
chiralities.

We now define the energy by setting $N = 2$ and $\bar\kappa = 0$ in
\eqref{energy02}:
\begin{equation}\label{energy022}
\begin{aligned}
  F_2(\{\rho_n,\theta_n,\r_n\}) &= \sum_{n=1}^2 \left[ - 4\pi \rho_n^2
    \ln\left( \frac{e^{1+2\gamma}}{4} \rho_n^2 \right) +
    \dbar \frac{\pi^3}{8} \rho_n (3\cos^2\theta_n  - 1) \right]  \\
  &+ \dbar \frac{\pi^3}{4} \sqrt{\rho_1 \rho_2} \left[ 3\cos\theta_1
    \cos\theta_2 F_{vv} \left( \sqrt{\frac{\rho_2}{\rho_1} },\frac{
        |\r_{2} -\r_1|}{ \sqrt{\rho_1\rho_2} } \right) - F_{ss} \left(
      \sqrt{\frac{\rho_2}{\rho_1} },\frac{ |\r_{2} -\r_1| }{
        \sqrt{\rho_1\rho_2} } \right) \right] \\
  &- 4\pi \dbar \sqrt{\rho_1\rho_2} \left[ \cos\theta_1
    F_{vs}\left(\sqrt{\frac{\rho_2}{\rho_1}},\frac{ |\r_{2} -\r_1| }{
        \sqrt{\rho_1 \rho_2} }\right) - \cos\theta_2
    F_{vs}\left(\sqrt{\frac{\rho_1}{\rho_2}},\frac{ |\r_{2} -\r_1| }{
        \sqrt{\rho_1\rho_2} }\right) \right].
\end{aligned}
\end{equation}
and would like to investigate the energy minimizing configurations of
stacked skyrmion pairs in a bilayer. We have the following result that
gives a complete characterization of the minimizers of this problem.

\begin{theorem}
  \label{t:bilayer}
  Let $L_0 > \bar L_0$. Then there exists $\bar\delta_0 > 0$ such that
  for every $\dbar < \dbar_0$ the minimizers of the energy
  $F_2(\{\rho_n, \theta_n, \r_n\})$ among
  $\rho_{1,2} \in (0, L_0^{-1})$, $\theta_{1,2} \in [-\pi, \pi)$ and
  $\r_{1,2} \in \R^2$ exist and satisfy
\begin{enumerate}[i)]
\item $\r_1=\r_2$;
\item $\theta_1=0$, $\theta_2=-\pi$;
\item  $\rho_1=\rho_2=\rho$, where
  \begin{align}
    \label{eq:rho2}
    \rho = { (16 + \pi^2) \dbar\over -64 W_{-1}\left( -{16 +
    \pi^2  \over 128} e^{1 + \gamma} \dbar\right)},
  \end{align}
  and $W_{-1}(t)$ is the Lambert $W$ function.
\end{enumerate}
\end{theorem}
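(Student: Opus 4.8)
The plan is to minimize $F_2$ in \eqref{energy022} by a hierarchy of partial minimizations -- first over the angles, then over the separation $|\r_1-\r_2|$, then over the two radii -- reducing everything to a scalar minimization solvable in closed form, while establishing existence along the way by excluding collapse, bursting and unbounded separation. Throughout I would exploit three ingredients already available: Theorem~\ref{t:FN} for existence at fixed positions, the monotonicity $|F_{vv}(\alpha,\lambda)|\le F_{vv}(\alpha,0)$ (and the analogues for $F_{ss},F_{vs}$) with equality iff $\lambda=0$ from Lemmas~\ref{l:Fvvss} and \ref{l:Fvs}, and the strict convexity on $(0,L_0^{-1})$ of $\Phi(\rho):=-4\pi\rho^2\ln(\tfrac{e^{1+2\gamma}}{4}\rho^2)$ guaranteed by $L_0>\bar L_0$, see \eqref{eq:L0bar}.

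\textbf{Angles and concentricity.} Since $F_2$ depends on $\theta_n$ only through $c_n:=\cos\theta_n\in[-1,1]$, the angle-dependent part is a quadratic form in $(c_1,c_2)$ with coefficients proportional to $\dbar$; writing $E(c,\lambda)$ for this energy at fixed radii and $\lambda=|\r_1-\r_2|/\sqrt{\rho_1\rho_2}$, $\alpha=\sqrt{\rho_2/\rho_1}$, the volume--surface part enters linearly through $-4\pi\dbar\sqrt{\rho_1\rho_2}F_{vs}(\alpha,\lambda)$ and $+4\pi\dbar\sqrt{\rho_1\rho_2}F_{vs}(\alpha^{-1},\lambda)$, which drives the minimizer toward the attractive quadrant $Q=\{c_1\ge0,\ c_2\le0\}$ of anti-parallel N\'eel rotation. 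On $Q$ one has $c_1c_2\le0$, and combining this with the kernel monotonicity gives, termwise, $E(c,\lambda)\ge E(c,0)$ for every $c\in Q$, with strict inequality when $\lambda>0$; hence the angle-minimized energy is smallest at $\lambda=0$, forcing $\r_1=\r_2$. At $\lambda=0$, where $F_{vv}(\alpha,0),F_{ss}(\alpha,0),F_{vs}(\alpha^{\pm1},0)>0$, minimizing the quadratic over $[-1,1]^2$ for comparable radii lands on the corner $(c_1,c_2)=(1,-1)$, i.e. $\theta_1=0$, $\theta_2=-\pi$.

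\textbf{Equal radii and the closed form.} With $\r_1=\r_2$ and $(\theta_1,\theta_2)=(0,-\pi)$ fixed the energy becomes $f(\rho_1,\rho_2)=\Phi(\rho_1)+\Phi(\rho_2)+\dbar\,\Psi(\rho_1,\rho_2)$, symmetric under the reflection--swap map $(\rho_1,\rho_2,c_1,c_2)\mapsto(\rho_2,\rho_1,-c_2,-c_1)$ that leaves $F_2$ invariant (using $F_{vv}(\alpha,0)=F_{vv}(\alpha^{-1},0)$, etc.). For $\dbar$ small the minimizing radii are $O(\dbar/|\ln\dbar|)$, the Hessian is dominated by $\Phi''\ge\Phi''(L_0^{-1})>0$, so $f$ is strictly convex near the diagonal; convexity plus the symmetry pin the minimizer to $\rho_1=\rho_2=\rho$. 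On the diagonal $f(\rho)=2\Phi(\rho)-\tfrac{\pi}{2}(16+\pi^2)\dbar\rho$, and $f'(\rho)=0$ reads $\rho\ln(\tfrac{e^{1+\gamma}}{2}\rho)=-\tfrac{16+\pi^2}{64}\dbar$; inverting with $x=\tfrac{e^{1+\gamma}}{2}\rho$ and the $W_{-1}$ branch (selected by $\rho\to0$ as $\dbar\to0$) yields \eqref{eq:rho2}. Existence comes out in the same step: $f'(0^+)=-\tfrac{\pi}{2}(16+\pi^2)\dbar<0$ excludes collapse, the interior critical point $O(\dbar/|\ln\dbar|)\ll L_0^{-1}$ excludes bursting for $\dbar$ small, and unbounded separation is excluded because decoupled (Bloch, $c_n=0$) skyrmions have energy $2\min_\rho[\Phi(\rho)-\tfrac{\pi^3}{8}\dbar\rho]>\min_\rho f(\rho)$.

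\textbf{Main obstacle.} The delicate point is the joint determination of concentricity, equal radii and the corner angles, because the optimal angles depend on both $\lambda$ and the radius ratio $\alpha$: the clean corner $(1,-1)$ is optimal only for comparable radii, whereas for very unequal radii the larger skyrmion prefers $\cos\theta\approx0$ (Bloch type), so the corner-energy $f$ is \emph{not} globally the angle-minimized energy and a naive global convexity argument breaks down. Making the reduction rigorous therefore requires (i) controlling that the angle-minimizer stays in the attractive quadrant $Q$ for all $\lambda$, so that concentricity follows from the kernel monotonicity, and (ii) a quantitative small-$\dbar$ argument combined with the reflection--swap symmetry to rule out asymmetric ($\rho_1\ne\rho_2$) minimizers where the angle optimum has left the corner. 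I expect (ii) to be the hardest part.
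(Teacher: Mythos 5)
Your outline reaches the correct closed form (the diagonal energy $2\Phi(\rho)-\tfrac{\pi}{2}(16+\pi^2)\dbar\rho$, the critical-point equation $\rho\ln(e^{1+\gamma}\rho/2)=-\tfrac{16+\pi^2}{64}\dbar$, and the $W_{-1}$ inversion to \eqref{eq:rho2} all match the paper), but the two reduction steps you yourself flag as open are genuine gaps, not technicalities, and the tools you propose do not close them. For (i): Lemmas \ref{l:Fvvss} and \ref{l:Fvs} give only $|F_{vv}(\alpha,\lambda)|\le F_{vv}(\alpha,0)$ and its analogues; they do \emph{not} say the kernels stay positive for $\lambda>0$ (the integrands carry the sign-changing factor $J_0(\lambda\xi)$). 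If, say, $F_{vs}(\alpha,\lambda)$ and $F_{vs}(\alpha^{-1},\lambda)$ have opposite signs, the angle optimum need not lie in $Q\cup(-Q)$ at all, and your termwise comparison $E(c,\lambda)\ge E(c,0)$ — which uses $c_1c_2\le 0$ and $c_1\ge 0\ge c_2$ — simply does not apply. You cannot establish concentricity without first knowing the quadrant, and you cannot pin down the quadrant without first knowing $\lambda=0$: the argument is circular as stated. For (ii): near-diagonal convexity plus the swap symmetry excludes asymmetric minimizers only \emph{near} the diagonal; a competitor with $\rho_1\ll\rho_2$ and Bloch-like angles lives exactly where your corner reduction is invalid, and nothing in the outline bounds its energy from below.

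The paper's proof avoids both issues with a single global lower bound that never attempts to locate the minimizer first. In \eqref{energy022} every interaction term is bounded below for \emph{arbitrary} angle signs by replacing each kernel with its $\lambda=0$ value and each cosine with its absolute value, so that $3\cos\theta_1\cos\theta_2F_{vv}(\alpha,\lambda)\ge-3|\cos\theta_1||\cos\theta_2|F_{vv}(\alpha,0)$, etc.; equality holds iff $\r_1=\r_2$, $\cos\theta_1\ge0\ge\cos\theta_2$. Then the single-layer terms $\tfrac{3\pi^3}{8}\dbar\rho_n\cos^2\theta_n$ are combined with the $F_{vv}$ cross term by completing the square, nonnegative because $F_{vv}(\alpha,0)\le1$; the $F_{ss}$ term is controlled by $F_{ss}(\alpha,0)\le1$; and — this is the key lemma your outline never invokes — the volume-surface terms are controlled by the \emph{sum} bound $F_{vs}(\alpha,0)+F_{vs}(\alpha^{-1},0)\le 2$ of \eqref{eq:Fvs2}, with equality only at $\alpha=1$. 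Together with $2\sqrt{\rho_1\rho_2}\le\rho_1+\rho_2$ this yields $F_2\ge F(\rho_1)+F(\rho_2)\ge 2\inf F$, where $F(\rho)=-4\pi\rho^2\ln(e^{1+2\gamma}\rho^2/4)-\tfrac{\pi}{4}(16+\pi^2)\dbar\rho$, and tracking the equality conditions forces exactly i)--iii); existence follows because the bound is saturated by the claimed configuration and $F$ attains its minimum inside $(0,L_0^{-1})$. Note that no smallness of $\dbar$, convexity, or perturbative Hessian analysis enters the reduction at all — smallness of $\dbar$ is needed only to keep the minimizer of $F$ in the admissible interval. That global, sign-insensitive bound is the idea your proposal is missing.
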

\begin{proof}
  We first observe that by Lemmas \ref{l:Fvvss} and \ref{l:Fvs} in the
  Appendix the absolute values of the functions
  $F_{vv}(\alpha,\lambda)$, $F_{ss}(\alpha,\lambda)$ and
  $F_{vs}(\alpha,\lambda)$ are maximized at $\lambda=0$ for fixed
  $\alpha > 0$. Moreover, at $\lambda=0$ all these functions are
  positive. It follows, that
\begin{equation}
\begin{aligned}
  F_2(\{\rho_n,\theta_n,\r_n\}) &\geq \sum_{n=1}^2 \left[ - 4\pi
    \rho_n^2 \ln\left( \frac{e^{1+2\gamma}}{4} \rho_n^2 \right) +
    \dbar
    \frac{\pi^3}{8} \rho_n (3\cos^2\theta_n  - 1) \right]  \\
  &- \dbar \frac{\pi^3}{4} \sqrt{\rho_1 \rho_2} \left[ 3|\cos\theta_2|
    |\cos\theta_1| F_{vv} \left( \sqrt{\frac{\rho_2}{\rho_1} },0
    \right) + F_{ss} \left(
      \sqrt{\frac{\rho_2}{\rho_1} }, 0 \right) \right] \\
  &- 4\pi \dbar \sqrt{\rho_1\rho_2} \left[ |\cos\theta_1|
    F_{vs}\left(\sqrt{\frac{\rho_2}{\rho_1}},0\right) + |\cos\theta_2|
    F_{vs}\left(\sqrt{\frac{\rho_1}{\rho_2}}, 0 \right) \right],
\end{aligned}
\end{equation}
and equality is achieved if and only if $\r_1=\r_2$,
$\cos\theta_1\geq 0$ and $\cos\theta_2\leq 0$.

We now observe by completing the square that the function
\begin{align}
  h_0(\theta_1,\theta_2)
  &=\frac{3\pi^3}{8} \dbar \left( \sum_{n=1}^2
    \rho_n \cos^2\theta_n - 2  \sqrt{\rho_1
    \rho_2} F_{vv}\left(
    \sqrt{\frac{\rho_2}{\rho_1} }, 0\right)
    |\cos\theta_1| |\cos\theta_2|
    \right) \label{11} 
\end{align}
is non-negative due to the fact that
$F_{vv}\left( \alpha, 0\right) \leq 1$ with equality achieved only at
$\alpha = 1$ (see Lemma \ref{l:Fvvss} in the Appendix). Moreover, for
$\rho_n>0$ the function $h_0(\theta_1,\theta_2) =0$ if and only if one
of the two alternatives below holds:
\begin{enumerate}[i)]
\item $\rho_1 \not= \rho_2$ and $|\theta_{1,2}| = \frac{\pi}{2}$;
\item $\rho_1=\rho_2$ and $|\cos\theta_1|=|\cos\theta_2|$.
\end{enumerate}
We also observe that 
\begin{equation}
  -|\cos\theta_1| F_{vs}\left(\sqrt{\frac{\rho_2}{\rho_1}},0\right)  -
  |\cos\theta_2| F_{vs}\left(\sqrt{\frac{\rho_1}{\rho_2}},0\right) \geq
  -F_{vs}\left(\sqrt{\frac{\rho_2}{\rho_1}}, 0\right)
  -F_{vs}\left(\sqrt{\frac{\rho_1}{\rho_2}},0\right) , 
\end{equation}
with equality achieved at $\theta_{1,2}= 0$ or $\theta_{1,2}=
\pi$. Minimizing the right-hand side of the above expression in
$\rho_1,\rho_2$ we obtain $\rho_1=\rho_2$ and hence the following
lower bound achievable if $\rho_1=\rho_2$:
\begin{equation}
  -F_{vs}\left(\sqrt{\frac{\rho_2}{\rho_1}}, 0\right)
  -F_{vs}\left(\sqrt{\frac{\rho_1}{\rho_2}},0\right) \geq -2, 
\end{equation}
where we used Lemma \ref{l:Fvs} in the Appendix. Recalling that
$0<F_{ss}(\alpha,0)\leq 1$ with equality achievable if and only if
$\alpha=1$ (see Lemma \ref{l:Fvvss} of the Appendix) and combining our
findings above, we obtain
\begin{equation}
\begin{aligned}
  F_2(\{\rho_n,\theta_n,\r_n\}) &\geq \sum_{n=1}^2 \left[ - 4\pi
    \rho_n^2 \ln\left( \frac{e^{1+2\gamma}}{4} \rho_n^2 \right) -
    \dbar \frac{\pi^3}{8} \rho_n \right] - \dbar \frac{\pi^3}{4}
  \sqrt{\rho_1 \rho_2} -  8\pi \dbar \sqrt{\rho_1\rho_2}  \\
  &\geq \sum_{n=1}^2 \left[ - 4\pi \rho_n^2 \ln\left(
      \frac{e^{1+2\gamma}}{4} \rho_n^2 \right) - \dbar
    \frac{\pi^3}{4} \rho_n - 4\pi \dbar \rho_n\right] \geq 2 F(\rho), 
\end{aligned}
\end{equation}
with equality achieved if and only if $\r_1=\r_2$, $\theta_1=0$ and
$\theta_2=\pi$, and $\rho_1=\rho_2=\rho$, where
\begin{align}
F(\rho) = - 4\pi\rho^2 \ln\left( \frac{e^{1+2\gamma}}{4}\rho^2 \right)
  - \dbar \frac{\pi^3}{4} {\rho} -4\pi \dbar \rho. 
\end{align}
The graph of $F(\rho)$ is illustrated in Fig. \ref{f:Frho}.

\begin{figure}
  \centering
  \includegraphics[width=8cm]{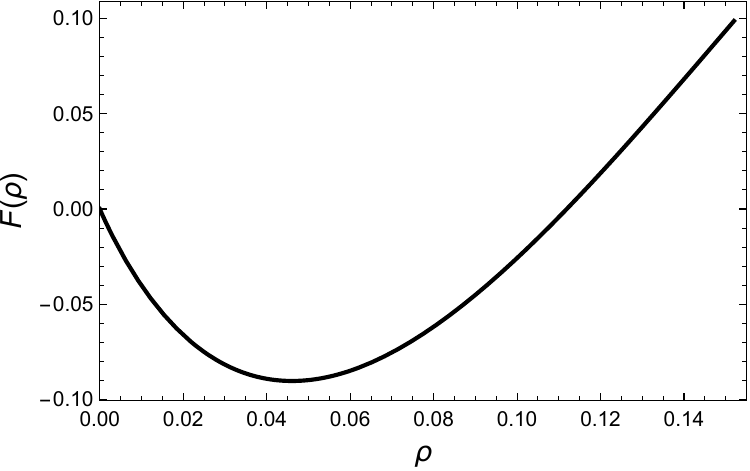}
  \caption{Plot of $F(\rho)$ when $\rho \in (0, \rho_0)$ with
    $\rho_0 = \bar L_0^{-1}$ for $\bar \delta = 0.25$. }
  \label{f:Frho}
\end{figure}

It is now clear that
\begin{align}
  \inf_{\rho_n} \min_{\theta_n, \r_n} F_2(\{ \rho_n,
  \theta_n, \r_n\}) = 2 \inf_\rho F(\rho).
\end{align}
We can minimize $F(\rho)$ in the admissible interval of
$\rho \in (0, L_0^{-1})$. It is easy to check that for
$L_0 > \bar L_0$ and $\dbar_0 > 0$ sufficiently small depending only
on $L_0$ the minimum of $F(\rho)$ is attained at the point
$\rho \in (0, L_0^{-1})$ satisfying
\begin{align}
  0 = F'(\rho)=-8 \pi \rho\left[ \ln(\rho^2
  \frac{e^{1+2\gamma}}{4})+1\right]- \frac{\dbar \pi^3}{4}-4\pi \dbar.  
\end{align}
After a little algebra, one can see that the solution of this equation
in the interval $(0, \rho_0)$, where $\rho_0 = \bar L_0^{-1}$, is
given by \eqref{eq:rho2}. This concludes the proof.
\end{proof}

To summarize, we have demonstrated that the global energy minimizers
of $F_2(\{\rho_n, \theta_n, \r_n\})$ are characterized by a certain
symmetry that makes the in-plane components of the magnetization in
\eqref{eq:prof} anti-parallel. It is instructive to see what this
assumption would lead to on the level of the original energy in
\eqref{eq:EN}, before introducing the truncated BP ansatz. Setting
$(\m_1^\perp, m_1^\|) = (-\m_2^\perp, m_2^\|) = \m$, we see that the
energy $\bar E_2$ of the configuration $\{\m_1, \m_2\}$ becomes
$\bar E_2(\{\m_1, \m_2\}) = 2 \bar E^\pm(\m)$, where
\begin{align}
  \bar E^\pm(\m) = \int_{\R^2} \left( |\nabla
  \m|^2 + |\m^\perp|^2 - \dbar \m^\perp \cdot \nabla m^\| \right)
  d^2 r  - 2 \dbar\int_{\R^2} \int_{\R^2} {(m^\|(\r) -
  m^\|(\r'))^2 \over 8 \pi  |\r - \r'|^3} \, d^2 r \, d^2 r'.
\end{align}
One can observe that for these configurations the volume-surface
interactions act as an effective interfacial DMI term, favoring the
N\'eel rotation of the magnetization with a particular rotation
sense. At the same time, the volume-surface and surface-surface
interactions act constructively to stabilize the skyrmion pair, while
the penalizing volume-volume charge interaction is absent. The reason
for the stabilizing action of the volume-surface interaction may be
seen from Fig. \ref{f:charges}. For skyrmions with anti-parallel
in-plane magnetizations in the two layers and counter-clockwise
rotation in the bottom layer the volume-surface interaction energy is
lower than that of all other possible skyrmion configurations with
anti-parallel in-plane magnetizations and the same out-of-plane
component. Notice that the stray field energy due to the volume-volume
interactions may be forced to be zero by choosing the Bloch rotation
in both layers instead (not necessarily anti-parallel). However, in
that case the DMI-like term due to the volume-surface interaction does
not contribute to the energy of the skyrmion pair, making the Bloch
rotation as compared to the N\'eel rotation with anti-parallel
in-plane components of the magnetization less favorable. Finally,
notice that the configuration in Fig. \ref{f:charges}(f) that
corresponds to the lowest energy is reminiscent of a flux closure
structure in bulk ferromagnets.

\begin{figure}
  \centering
  \includegraphics[width=15cm]{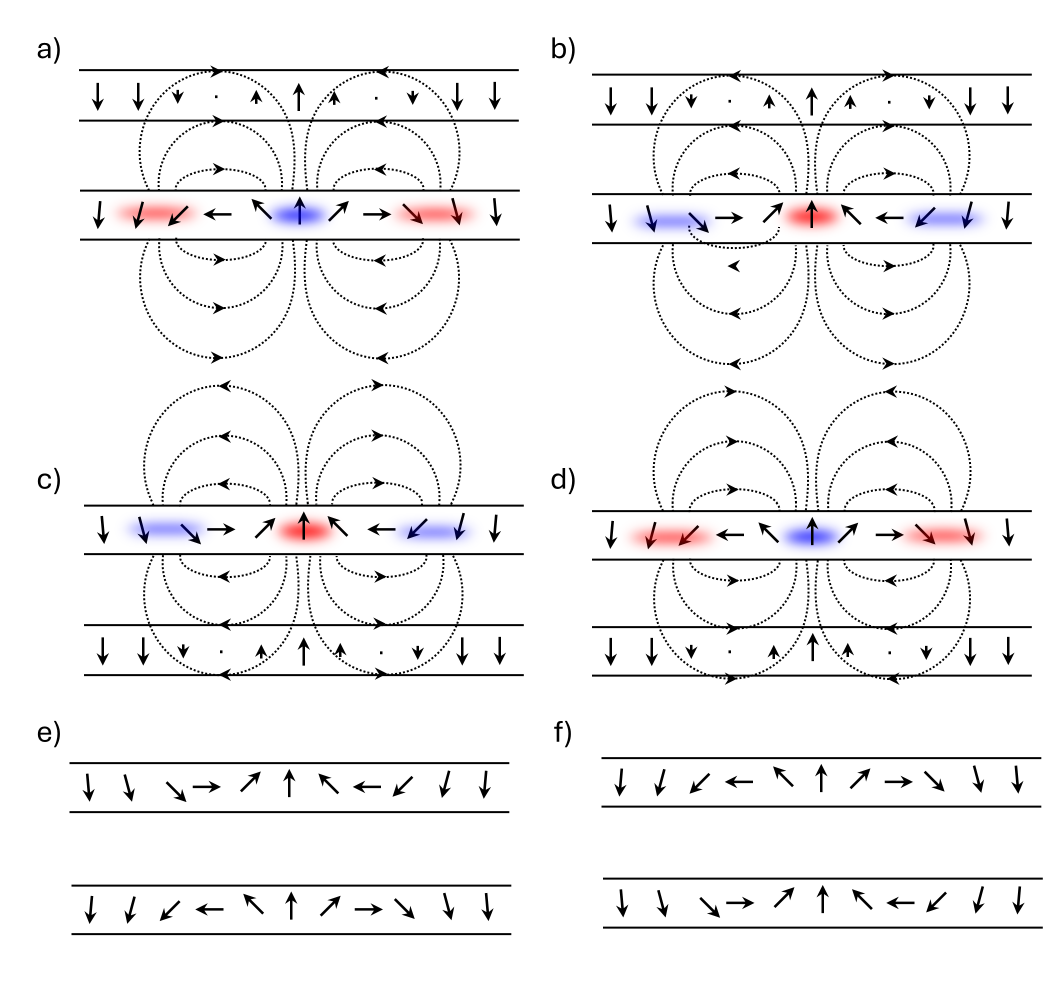}
  \caption{An illustration of the stray field interactions between the
    volume and surface charges in a bilayer for a skyrmion with
    anti-parallel in-plane magnetization components (i.e., with
    $m_1^\| = m_2^\|$ and $\m_1^\perp = -\m_2^\perp$): (a,c,e)
    clockwise rotation in the bottom layer; (b,d,f) anti-clockwise
    rotation in the bottom layer. The volume charge density
    $\uprho_\m^\mathrm{vol} = -\nabla_\perp \cdot \m^\perp$ in one
    layer is indicated in blue (negative) and red (positive), and its
    associated magnetic field lines are shown by the lines with arrows
    going from red to blue regions. Only the out-of-plane component
    $m_n^\|$ of the magnetization in the other layer that contributes
    to the volume-surface interaction is shown in (a-d), while the
    corresponding full magnetization profiles are shown in (e,f).  In
    (a,c), the out-of-plane magnetic moments of one layer point agains
    the field lines from the other layer, resulting in a higher
    energy.  In (b,d), the out-of-plane magnetic moments of one layer
    point along the field lines from the other layer, resulting in a
    lower energy.  The total surface-surface interaction energies are
    identical in both cases, and the total volume-volume interaction
    energy is asymptotically zero.}
  \label{f:charges}
\end{figure}

To verify the predictions of our asymptotic analysis, we carried out a
numerical study of skyrmion profiles in stray field-coupled
ferromagnetic bilayers, using {\sc MuMax3} software
\cite{leliaert18}. For the material parameters, we chose those
corresponding to a ferrimagnetic material such as GdCo with the
material parameters $A = 20$ pJ/m, $M_s = 10^5 $A/m, $K_u = 6700$
J/m$^3$ \cite{caretta18}. We consider two 5 nm-thick layers of such a
material with a non-magnetic separator of negligible thickness. These
parameters give an exchange length $\ell_{ex} \simeq 56.4$ nm and a
small dimensionless layer thickness $\delta \simeq 0.089$, justifying
the use of the reduced model appropriate for ultrathin films. The
quality factor associated with the uniaxial magnetocrystalline
anisotropy is $Q \simeq 1.066$, giving the parameter
$\bar \delta \simeq 0.344$ characterizing the strength of the stray
field interaction that is also within the validity range of the
asymptotic theory in $\bar \delta \ll 1$. For this value of
$\bar \delta$, the formula in \eqref{eq:rho2} predicts
$\rho \simeq 0.0938$, which corresponds to the dimensional skyrmion
radius of 20.5 nm. The resulting profiles obtained, using the {\tt
  minimize} function of {\sc MuMax3} on a $2048 \times 2048 \times 2$
grid with the in-plane discretization steps
$\Delta x = \Delta y = 0.5$ nm, the out-of-plane step equal to $d = 5$
nm and the number of repeats in $(X, Y, Z)$ set to $(5, 5, 0)$ to
approximate the periodic boundary conditions, are presented in
Fig. \ref{f:bs}. In a very good agreement with the theoretical
prediction, the obtained profiles consist of a pair of concentric
N\'eel skyrmions that are very close to the Belavin-Polyakov profiles
with radius 20 nm obtained by fitting the numerical profiles to the
Belavin-Polyakov profiles.

\begin{figure}
  \centering  \includegraphics[width=16cm]{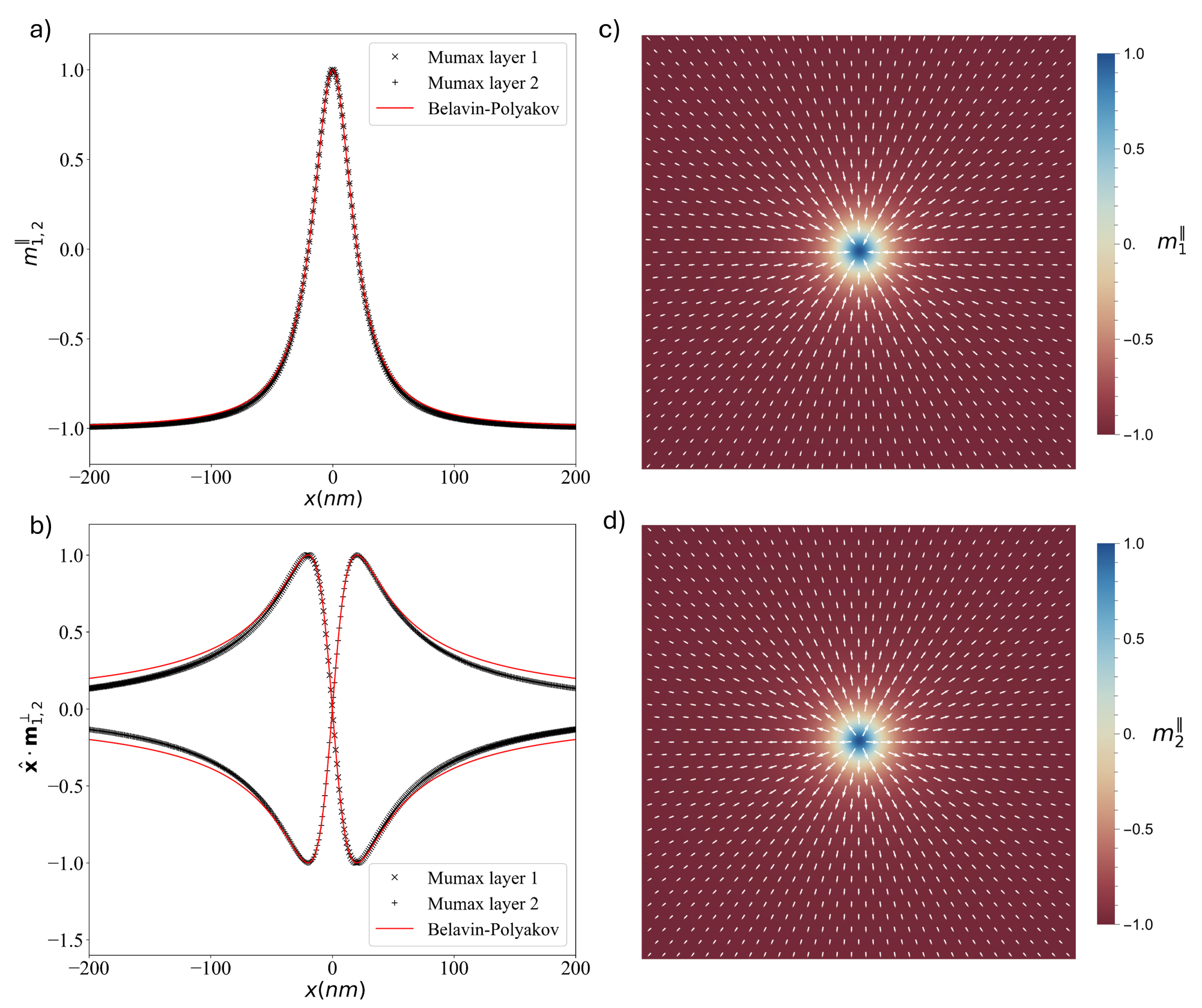}
  \caption{Stray field-stabilized N\'eel skyrmion in an ultrathin
    ferromagnetic bilayer. (a) The out-of-plane components
    $m^\|_{1,2}$ of the magnetization in the bottom and top layers,
    respectively, along the horizontal line through the skyrmion
    center. (b) The in-plane components
    $\hat{\mathbf{x}} \cdot \m^\perp_{1,2}$ in the bottom and top
    layers, respectively, along the horizontal line through the
    skyrmion center. (c) and (d) The top view of the magnetization in
    the bottom and top layers, respectively. Results of the {\sc
      MuMax3} simulations on the $2048 \times 2048 \times 2$ grid with
    the in-plane discretization steps $\Delta x = \Delta y = 0.5$ nm,
    the out-of-plane discretization step $\Delta z$ equal to the
    single layer thickness $d = 5$ nm, and periodic boundary
    conditions in the plane.  The material parameters are $A = 20$
    pJ/m, $M_s = 10^5 $A/m, $K_u = 6700$ J/m$^3$, corresponding to a
    ferrimagnetic material, with a non-magnetic spacer of negligible
    thickness. In all the panels, only a 400 nm $\times$ 400 nm region
    around the skyrmion center is shown.  In (a) and (b), the N\'eel
    Belavin-Polyakov profiles with radius 20 nm are also shown by thin
    red lines.}
  \label{f:bs}
\end{figure}

We conclude our study by attempting to calculate the function
$F_2(\{\r_1, \r_2\})$ from Theorem \ref{t:FN} that determines the
interaction energy of two skyrmions in the adjacent layers of a
bilayer. By the translational and rotational symmetries of the problem
this function depends only on $r = |\r_1 - \r_2|$ and is obtained by
minimizing the function $F_2(\{ \rho_n, \theta_n, \r_n\})$ in four
parameters $\rho_{1,2}$ and $\theta_{1,2}$. Nevertheless, the problem
is still intractable analytically, since no manageable closed form
analytical expressions for the functions $F_{vv}(\alpha, \lambda)$,
$F_{ss}(\alpha, \lambda)$ and $F_{vs}(\alpha, \lambda)$ are available
for $\lambda > 0$ and general $\alpha > 0$. It would seem plausible,
and is supported by the numerical evaluation of the energy at a few
points of the parameter space that the minimizers from Theorem
\ref{t:FN} in the case of bilayers, $N = 2$, would exhibit a symmetry
for the skyrmion radius with respect to the layer position, which is
also certainly true for the global energy minimizers by Theorem
\ref{t:bilayer}. Thus, it is reasonable to conjecture that in a
minimizer from Theorem \ref{t:FN} at fixed $r > 0$ we have
$\rho_1 = \rho_2 = \rho$, and thus
$F_2(\{\r_1, \r_2\}) = F_2^\mathrm{sym}(r)$, where
$F_2^\mathrm{sym}(r)$ can now be calculated in closed form. Indeed, we
have
\begin{align}
  F_{vv}(1, \lambda) = \frac{32}{3 \pi ^2 \lambda ^2 \left(\lambda 
  ^2+4\right)^{3/2}} \Bigg[ \left(\lambda ^2 \sqrt{\lambda ^2+4} +4
  \sqrt{\lambda ^2+4}+8\right) K^2\left(\frac{1}{2}-\frac{\sqrt{\lambda
  ^2+4}}{4}\right)\notag \\
  -4 \left( \lambda ^2 \sqrt{\lambda ^2+4}+2
  \sqrt{\lambda ^2+4}+4\right) K\left(\frac{1}{2}-\frac{\sqrt{\lambda
  ^2+4}}{4}\right) E\left(\frac{1}{2}-\frac{\sqrt{\lambda
  ^2+4}}{4}\right) \\
  +4 \left(\lambda ^2+2\right) \sqrt{\lambda ^2+4} \
  E^2\left(\frac{1}{2}-\frac{\sqrt{\lambda 
  ^2+4}}{4}\right)\Bigg], \notag
\end{align}
where, again, $K(m)$ and $E(m)$ are the complete elliptic integrals of
the first and second kind, respectively. Similarly
\begin{align}
  F_{ss}(1, \lambda) = -\frac{32}{\pi ^2 \lambda ^2 \left(\lambda 
  ^2+4\right)^{3/2}} \Bigg[ \left(\left(\sqrt{\lambda ^2+4}+4\right)
  \lambda ^2+4 \left(\sqrt{\lambda ^2+4}+2\right)\right) 
  K^2\left(\frac{1}{2}-\frac{\sqrt{\lambda ^2+4}}{4}\right)\notag \\
  -8
  \left(\lambda ^2+\sqrt{\lambda ^2+4}+2\right)
  K\left(\frac{1}{2}-\frac{\sqrt{\lambda 
  ^2+4}}{4}\right) E\left(\frac{1}{2}-\frac{\sqrt{\lambda
  ^2+4}}{4}\right) \\
  +8 \sqrt{\lambda ^2+4} \
  E^2\left(\frac{1}{2}-\frac{\sqrt{\lambda 
  ^2+4}}{4}\right)\Biggr], \notag
\end{align}
and
\begin{align}
  F_{vs}(1, \lambda) = 2 \left(\frac{1}{\lambda ^2+4}+\frac{4 \sinh
  ^{-1}\left(\frac{\lambda }{2}\right)}{\lambda  \left(\lambda
  ^2+4\right)^{3/2}}\right).   
\end{align}
With these expressions for $F_{vv}(\alpha, \lambda)$,
$F_{ss}(\alpha, \lambda)$ and $F_{vs}(\alpha, \lambda)$, we can
proceed to minimize the energy in \eqref{energy022} with respect to
the angles $\theta_{1,2}$ to obtain a closed form expression for
$F_2^\mathrm{sym} (\rho, r) = \displaystyle \min_{\theta_1, \theta_2}
F_2 (\rho_1, \theta_1, \rho_2, \theta_2, r)$ with
$\rho_1 = \rho_2 = \rho$. The plot of this function for a particular
value of $\bar \delta$ is presented in Fig. \ref{f:F2sym}.

\begin{figure}
  \centering
  \includegraphics[width=12cm]{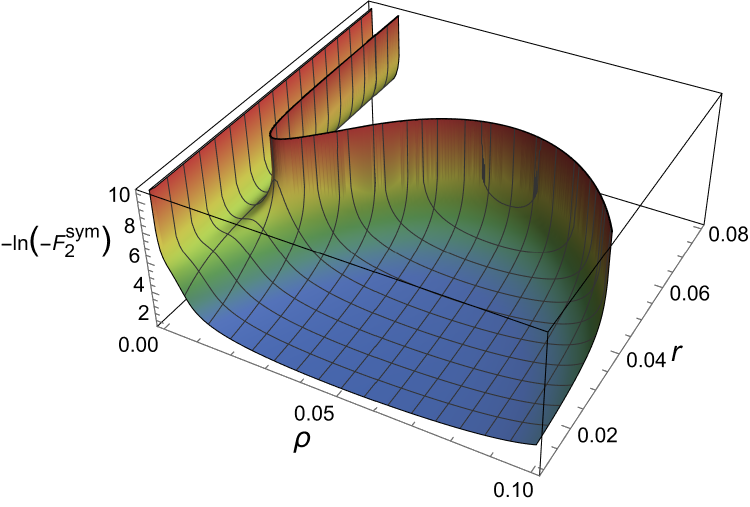}
  \caption{Plot of $- \ln \left[ -F_2^\mathrm{sym}(\rho, r) \right]$
    for $\dbar = 0.25$. The choice of the function plotted helps to
    visualize the part of the parameter space in which the energy is
    negative. }
  \label{f:F2sym}
\end{figure}

The function $F_2^\mathrm{sym} (\rho, r)$ can finally be minimized
numerically for a given value of $r > 0$. The results for a particular
choice $\dbar = 0.25$ are presented in Fig. \ref{f:F2symmin}, where
the minimal energy, the optimal value of $\rho$ and the optimal value
of $\cos \theta_1 = -\cos \theta_2 > 0$ (the latter is due to the fact
that from $0 < F_{vv}(1, \lambda) < 1$ for $\lambda > 0$ follows that
the energy is a strictly convex function of $\cos \theta_1$ and
$\cos \theta_2$) are plotted. We observe that the minimum of the
energy is indeed attained at $r = 0$, as it should be by Theorem
\ref{t:bilayer}. As the value of $r$ increases, the equilibrium radius
$\rho$ also slightly increases, while the optimal angles remain those
of the minimizer: $\theta_1 = 0$ and $\theta_2 = -\pi$, in Theorem
\ref{t:bilayer}. We note that as can be seen from Fig. \ref{f:F2sym},
the energy landscape in the $(\rho, r)$ plane consists of a wide
valley for not too big values of $\rho \sim r$ and a narrow gorge
providing an escape path to $r \to \infty$ and corresponding to an
almost constant value of $\rho$ and increasing values of $r$. At
$r \gg 1$ the latter corresponds to a pair of non-interacting
skyrmions in each layer.

When the value of $r$ is increased from zero, at first the minimum is
found in the wide valley, but as the parameters push the energy
towards the steep wall at a certain critical value of $r$ (close to
0.054 for $\dbar = 0.25$) the minimizer jumps into the gorge and
continues to follow along it. Notice that for this value of $\dbar$
the angles switch abruptly from those corresponding to the N\'eel
skyrmions in a global energy minimizer to
$|\theta_{1,2}| \simeq {\pi \over 2}$ corresponding to a pair of Bloch
skyrmions. Also notice that the radius of the skyrmions in the gorge
is an order of magnitude smaller than that of the global minimizer,
and the absolute value of its energy is two orders of magnitude lower,
respectively. In sum, the skyrmions remain of N\'eel type and exhibit
a strongly attractive interaction within a certain range of separation
distances. Beyond that range they abruptly change their nature and
behave as a pair of weakly interacting Bloch skyrmions, and at large
separations these skyrmions exhibit a weak repulsive interaction
dominated by the dipolar forces. The strongly nonlinear attractive
interaction of a pair of skyrmions at sufficiently short distances
thus may provide an important stabilization mechanism that is highly
desirable for spintronic applications.

\begin{figure}
  \centering
  \includegraphics[width=16cm]{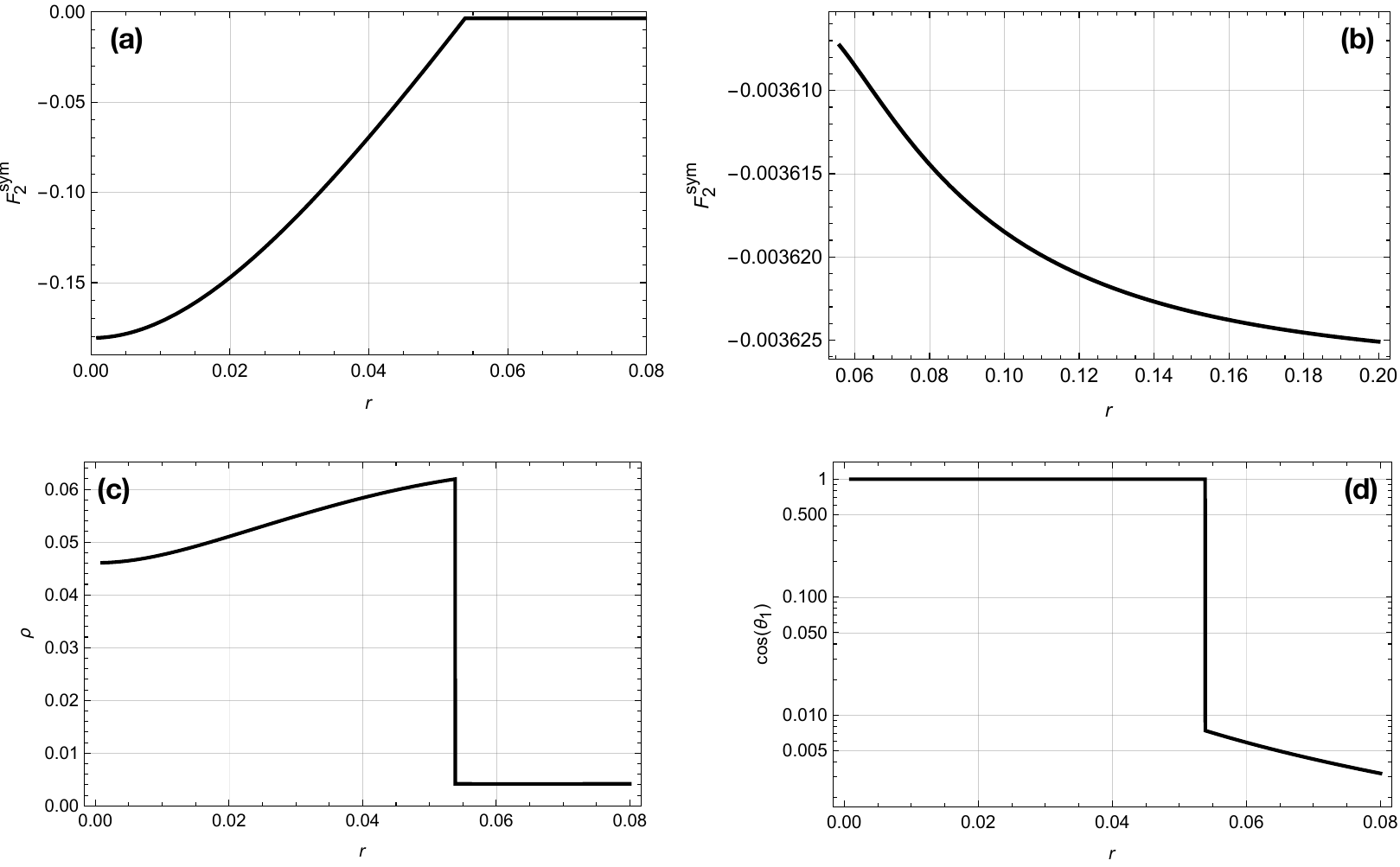}
  \caption{The result of the global numerical minimization of
    $F_2^\mathrm{sym}(\rho, r)$ in $\rho$ for $\dbar = 0.25$: (a) the
    minimum energy $F_2^\mathrm{sym}(r)$; (b) zooming in on the
    minimum energy $F_2^\mathrm{sym}(r)$ to show the repulsion at
    large separation distances; (c) optimal value of $\rho$ as a
    function of $r$; (d) optimal value of $\cos \theta_1$ as a
    function of $r$.}
  \label{f:F2symmin}
\end{figure}


\appendix 
\section{Appendix}
\label{s:appendix}

In this section, we establish a few basic properties of the functions
$F_{vv}(\alpha, \lambda)$, $F_{ss}(\alpha, \lambda)$ and
$F_{vs}(\alpha, \lambda)$ necessary in proving our theorems. An
impatient reader may quickly convince oneself about these facts by
simply plotting those functions evaluated numerically from their
respective integral definitions.

\begin{lemma}
  \label{l:Fvvss}
  For $\alpha \in (0, 1)$, let $F_{vv}(\alpha, 0)$ and
  $F_{ss}(\alpha, 0)$ be defined by \eqref{eq:Fvv0} and
  \eqref{eq:Fss0}, respectively. Then $0 < F_{vv}(\alpha, 0) < 1$ and
  $0 < F_{ss}(\alpha, 0) < 1$. Furthermore, for all $\alpha > 0$ and
  $\lambda \geq 0$
  \begin{align}
    |F_{vv}(\alpha, \lambda)| \leq F_{vv}(\alpha, 0), \qquad
    |F_{ss}(\alpha, \lambda)| \leq F_{ss}(\alpha, 0),
  \end{align}
  where $F_{vv}(\alpha, \lambda)$ and $F_{ss}(\alpha, \lambda)$ are
  defined in \eqref{eq:Fvvdef} and \eqref{eq:Fssdef}, and $F_{vv}(\alpha, \lambda)$ and
  $F_{ss}(\alpha, \lambda)$ are uniquely maximized by
  $(\alpha, \lambda) = (1, 0)$, with $\max F_{vv} = \max F_{ss} = 1$.
\end{lemma}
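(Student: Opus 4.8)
The plan is to work directly from the integral representations \eqref{eq:Fvvdef} and \eqref{eq:Fssdef} evaluated at $\lambda = 0$, i.e. from $F_{vv}(\alpha,0) = \frac{32}{3\pi^2}\int_0^\infty \xi^2 K_1(\alpha\xi)K_1(\xi/\alpha)\,d\xi$ and $F_{ss}(\alpha,0) = \frac{32}{\pi^2}\int_0^\infty \xi^2 K_0(\alpha\xi)K_0(\xi/\alpha)\,d\xi$, which by the computation preceding the lemma agree with the elliptic-integral expressions \eqref{eq:Fvv0}--\eqref{eq:Fss0}. Positivity is then immediate: since $K_0, K_1 > 0$ on $(0,\infty)$, both integrands are strictly positive, so $F_{vv}(\alpha,0) > 0$ and $F_{ss}(\alpha,0) > 0$ for every $\alpha > 0$.

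For the dependence on $\lambda$, I would simply use $|J_0(t)| \le 1$. As the remaining factor $\xi^2 K_\nu(\alpha\xi)K_\nu(\xi/\alpha)$ is nonnegative, replacing $J_0(\lambda\xi)$ by its modulus bound inside the integral gives $|F_{vv}(\alpha,\lambda)| \le F_{vv}(\alpha,0)$ and $|F_{ss}(\alpha,\lambda)| \le F_{ss}(\alpha,0)$ at once. Because $|J_0(t)| < 1$ strictly for every $t > 0$, the inequality is strict whenever $\lambda > 0$; hence for each fixed $\alpha$ the maximum of $|F_{vv}(\alpha,\cdot)|$ and $|F_{ss}(\alpha,\cdot)|$ is attained only at $\lambda = 0$.

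The heart of the argument is the bound $F_{vv}(\alpha,0) \le 1$ (and the analogue for $F_{ss}$), which I would derive from Cauchy--Schwarz together with the scaling invariance of the weight $\xi^2\,d\xi$. Setting $I_\nu = \int_0^\infty u^2 K_\nu(u)^2\,du$, the substitution $u = \alpha\xi$ gives $\int_0^\infty \xi^2 K_\nu(\alpha\xi)^2\,d\xi = \alpha^{-3} I_\nu$ and $\int_0^\infty \xi^2 K_\nu(\xi/\alpha)^2\,d\xi = \alpha^{3} I_\nu$, so Cauchy--Schwarz yields $\int_0^\infty \xi^2 K_\nu(\alpha\xi)K_\nu(\xi/\alpha)\,d\xi \le \sqrt{\alpha^{-3} I_\nu}\,\sqrt{\alpha^{3} I_\nu} = I_\nu$. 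The scaling factors $\alpha^{\mp 3}$ cancel exactly, and $I_\nu$ is precisely the value of the same cross-integral at $\alpha = 1$; multiplying by the normalizing constants $\frac{32}{3\pi^2}$ and $\frac{32}{\pi^2}$ therefore gives $F_{vv}(\alpha,0) \le F_{vv}(1,0) = 1$ and $F_{ss}(\alpha,0) \le F_{ss}(1,0) = 1$, with no need to evaluate $I_\nu$ explicitly.

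The delicate point, which I expect to be the main obstacle, is the strictness for $\alpha \ne 1$, i.e. the equality case of Cauchy--Schwarz. Equality would force $\xi K_\nu(\alpha\xi)$ and $\xi K_\nu(\xi/\alpha)$ to be proportional in $L^2((0,\infty))$, hence $K_\nu(\alpha\xi)/K_\nu(\xi/\alpha)$ to be a.e. constant. Using the leading large-argument asymptotics $K_\nu(t) \sim \sqrt{\pi/(2t)}\,e^{-t}$, this ratio is asymptotic to a constant multiple of $e^{-(\alpha - 1/\alpha)\xi}$ as $\xi \to \infty$, which is non-constant whenever $\alpha \ne 1$; thus equality holds only at $\alpha = 1$. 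This already delivers the strict bounds $F_{vv}(\alpha,0) < 1$ and $F_{ss}(\alpha,0) < 1$ for $\alpha \in (0,1)$ claimed in the statement. Finally, chaining the two monotonicity steps gives $F_{vv}(\alpha,\lambda) \le 1$ and $F_{ss}(\alpha,\lambda) \le 1$ with equality if and only if $\alpha = 1$ and $\lambda = 0$, which is exactly the asserted unique maximizer $(\alpha,\lambda) = (1,0)$ with $\max F_{vv} = \max F_{ss} = 1$.
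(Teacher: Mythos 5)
Your proof is correct, but it takes a genuinely different route from the paper's. The paper works entirely with the closed-form elliptic-integral expressions \eqref{eq:Fvv0}--\eqref{eq:Fss0}: after the substitution $t = 1-\alpha^4$ it establishes that $F_{ss}((1-t)^{1/4},0)$ and $F_{vv}((1-t)^{1/4},0)$ are strictly decreasing on $(0,1)$ by a cascade of four successive differentiations, reducing each sign question to the next auxiliary combination of $K(t)$ and $E(t)$ until it bottoms out at elementary facts such as $(t-1)K(t)+E(t)>0$ and $K(t)-E(t)>0$. You instead stay with the integral representations \eqref{eq:Fvvdef}--\eqref{eq:Fssdef} at $\lambda=0$ and observe that the bound $F(\alpha,0)\le F(1,0)=1$ is an instance of Cauchy--Schwarz, because the scaling $u=\alpha\xi$ makes the two factors $\alpha^{\mp 3}I_\nu$ cancel exactly; the equality case is excluded for $\alpha\ne 1$ via the large-argument asymptotics $K_\nu(t)\sim\sqrt{\pi/(2t)}\,e^{-t}$, which show the two kernels cannot be proportional. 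The treatment of the $\lambda$-dependence via $|J_0|<1$ on $(0,\infty)$ is identical in both arguments. Your approach is shorter, needs no special-function identities beyond the exponential decay of $K_\nu$, handles $\nu=0$ and $\nu=1$ uniformly, and in fact never requires the explicit elliptic-integral evaluations at all (positivity is also immediate from the positive integrand, which under the elliptic-integral formulation is not obvious). What the paper's heavier computation buys is strictly stronger information: monotonicity of $\alpha\mapsto F_{vv}(\alpha,0)$ and $\alpha\mapsto F_{ss}(\alpha,0)$ on $(0,1)$, not merely the location of the maximum -- though nothing in the subsequent use of the lemma (Theorem \ref{t:bilayer}) needs more than the maximum and its uniqueness, which you deliver. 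One small housekeeping point: since the lemma's first sentence is phrased in terms of \eqref{eq:Fvv0}--\eqref{eq:Fss0}, your argument implicitly uses that those closed forms equal the $\lambda=0$ integrals, which is exactly how the paper introduces them, so this identification is legitimate to cite rather than reprove.
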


\begin{proof}
  The statements about $F_{vv}(\alpha, \lambda)$ and
  $F_{ss}(\alpha, \lambda)$ follow immediately from the first part of
  the lemma, since from the definitions of $F_{vv}(\alpha, \lambda)$
  and $F_{ss}(\alpha, \lambda)$, and from the fact that
  $|J_0(\lambda \xi)| < 1$ for all $\lambda > 0$ and $\xi > 0$ we have
  $|F_{vv}(\alpha, \lambda)| < F_{vv}(\alpha, 0)$ and
  $|F_{ss}(\alpha, \lambda)| < F_{ss}(\alpha, 0)$ for all $\alpha > 0$
  and $\lambda > 0$, together with the facts that
  $F_{vv}(\alpha, 0) = F_{vv}(\alpha^{-1}, 0)$ and
  $F_{ss}(\alpha, 0) = F_{ss}(\alpha^{-1}, 0)$.

  We now prove that $F_{ss}(\alpha, 0)$ attains its maximum only at
  $\alpha = 1$. Since $F_{ss}(\alpha, 0) = F_{ss}(\alpha^{-1}, 0)$ and
  $F_{ss}(1,0) = 1$, we only need to investigate the interval of
  $\alpha \in (0, 1)$. Recalling the definition of $F_{ss}(\alpha,0)$
  and using the change of variables $t=1-\alpha^4$ with $t \in (0,1)$,
  we note that
\begin{align} 
  F_{ss}((1-t)^\frac14, 0) = \frac{16 (1-t)^\frac34}{\pi t^2} \left(
  (2-t)K(t)-2E(t) \right). 
\end{align}
We now need to show that $f(t)=F_{ss}((1-t)^\frac14, 0)$ has a unique
maximum at $t=0$. We can differentiate to obtain
\begin{align} 
  f'(t)= -\frac{4}{\pi t^3 (1-t)^\frac14} \left( (3t^2 -16 t+16) K(t)
  +8(t-2) E(t) \right). 
\end{align}
If we show that $f_1(t) = (3t^2 -16 t+16) K(t) +8(t-2) E(t) >0$ on
$(0,1)$ then it follows that $f'(t)<0$ on $(0,1)$, and hence the
maximum of $F_{ss}(\alpha, 0)$ is achieved at $\alpha = 1$.

We note that $f_1(0)=0$ and 
\begin{align} 
 f_1'(t)= -\frac{3}{2 (1-t)} \left( (3t^2 -11 t+8) K(t) +(7t-8) E(t) \right).
\end{align}
Hence we need to show that
$f_2(t)= (3t^2 -11 t+8) K(t) +(7t-8) E(t)<0$. Again, we note that
$f_2(0)=0$ and
\begin{align} 
 f_2'(t)= \frac92(t-2) K(t) +9 E(t) .
\end{align}
Hence we need to show that $f_3(t) = \frac92(t-2) K(t) +9 E(t) <0$. We
note that $f_3(0)=0$ and
\begin{align} 
 f_3'(t)= -\frac{9}{4(1-t)}( K(t) (t-1) +E(t)) .
\end{align}
Finally, we need to show that $f_4(t)= K(t) (t-1) +E(t)>0$. We note
that $f_4(0)=0$ and
\begin{align} 
 f_4'(t)= \frac12 K(t) >0.
\end{align}
The result is proved.

We now prove that $F_{vv}(\alpha, 0)$ attains its maximum only at
$\alpha = 1$. Similarly, we consider only the interval of
$\alpha \in (0,1)$. Recalling the definition of $F_{vv}(\alpha,0)$ and
using the change of variables $t=1-\alpha^4$ with $t \in (0,1)$, we
obtain
\begin{align} 
  F_{vv}((1-t)^\frac14, 0) = \frac{16 (1-t)^\frac14}{3\pi t^2} \left(
  (2-t)E(t)-2(1-t)K(t) \right). 
\end{align}

We now need to show that $g(t)=F_{vv}((1-t)^\frac14, 0)$ has maximum
at $t=0$. We can differentiate to obtain
\begin{align} 
  g'(t)= \frac{4}{3\pi t^3 (1-t)^\frac34} \left(  8(t^2 -3 t+2) K(t)
  -(t^2-16t+16) E(t) \right). 
\end{align}
If we show that $g_1(t) = 8(t^2 -3 t+2) K(t) -(t^2-16t+16) E(t) <0$ on
$(0,1)$ then it follows that $g'(t)<0$ on $(0,1)$, and, hence, the
maximum of $F_{vv}(\alpha, 0)$ is achieved at $\alpha=1$.

Arguing as in the case of $f(t)$, we note that $g_1(0)=0$ and 
\begin{align} 
 g_1'(t)= -\frac52 \left( (8-5t) K(t) +(t-8) E(t) \right).
\end{align}
We now need to show that
$g_2(t)=-\left( (8-5t) K(t) +(t-8) E(t) \right)<0$. Again, we note
that $g_2(0)=0$ and
\begin{align} 
  g_2'(t)= -\frac{3}{2(t-1)} \left(2(1 - t) K(t) +(t-2)E(t) \right) .
\end{align}
Hence, we now need to show that
$g_3(t) = -\left( 2(1-t) K(t) + (t-2)E(t) \right) >0$. We note that
$g_3(0)=0$ and
\begin{align} 
 g_3'(t)= \frac32( K(t) - E(t)) >0.
\end{align}
The result is proved.
\end{proof}

\begin{lemma}
  \label{l:Fvs}
  For $\alpha > 0$, let $F_{vs}(\alpha, 0)$ be defined by
  \eqref{eq:Fvs0}. Then $0 < F_{vs}(\alpha, 0) \leq 2$, and for all
  $\lambda \geq 0$
  \begin{align}
    |F_{vs}(\alpha, \lambda)| \leq F_{vs}(\alpha, 0),
  \end{align}
  where the function $F_{vs}(\alpha, \lambda)$ is defined in
  \eqref{eq:Fvsdef}. Furthermore,
  \begin{align}
    \label{eq:Fvs2}
    |F_{vs}(\alpha, \lambda)| + |F_{vs}(\alpha^{-1}, \lambda)| \leq 2, 
  \end{align}
  with equality achieved only at $(\alpha, \lambda) = (1, 0)$.
\end{lemma}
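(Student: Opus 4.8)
The plan is to separate the three assertions and observe that almost everything funnels into a single inequality for the $\lambda=0$ profile. Throughout I would use the integral representation \eqref{eq:Fvsdef} together with the closed form \eqref{eq:Fvs0}, and the elementary facts that $K_0(t),K_1(t)>0$ and $|J_0(t)|\le 1$ for $t>0$, with $|J_0(t)|<1$ once $t>0$. First, for positivity of $F_{vs}(\alpha,0)$ I would introduce $\phi(\alpha)=\alpha^4-4\ln\alpha-1$, the numerator factor in \eqref{eq:Fvs0}, and compute $\phi'(\alpha)=4(\alpha^4-1)/\alpha$, so that $\phi$ decreases on $(0,1)$ and increases on $(1,\infty)$ with $\phi(1)=0$; hence $\phi(\alpha)>0$ for $\alpha\ne1$, and since $\alpha^3>0$ and $(\alpha^4-1)^2>0$ there, $F_{vs}(\alpha,0)>0$, while its continuous extension gives the value $1$ at $\alpha=1$. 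The bound $|F_{vs}(\alpha,\lambda)|\le F_{vs}(\alpha,0)$ is then immediate from \eqref{eq:Fvsdef}: estimating $|J_0(\lambda\xi)|\le1$ under the integral, with $\xi^2 K_0(\alpha\xi)K_1(\xi/\alpha)>0$, yields $|F_{vs}(\alpha,\lambda)|\le 2\int_0^\infty \xi^2 K_0(\alpha\xi)K_1(\xi/\alpha)\,d\xi=F_{vs}(\alpha,0)$, the inequality being strict whenever $\lambda>0$ because then $|J_0(\lambda\xi)|<1$ on $(0,\infty)$.

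The crux is \eqref{eq:Fvs2}, and I would prove its $\lambda=0$ version first, namely $S(\alpha):=F_{vs}(\alpha,0)+F_{vs}(\alpha^{-1},0)\le2$ with equality only at $\alpha=1$; note that $S(\alpha)=S(\alpha^{-1})$. The key simplification is the substitution $\alpha=e^s$. Writing \eqref{eq:Fvs0} as $F_{vs}(e^s,0)=e^{-s}(e^{4s}-4s-1)/(2\sinh^2 2s)$ and adding its $s\mapsto-s$ counterpart, the numerator collapses via $\cosh 3s-\cosh s=2\sinh 2s\,\sinh s$ to give
\begin{align*}
  S=\frac{\sinh 2s+2s}{2\sinh s\,\cosh^2 s}=\frac{1}{\cosh s}+\frac{s}{\sinh s\,\cosh^2 s}.
\end{align*}
This is even in $s$ with $S(0)=2$, so it suffices to treat $s>0$. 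Clearing the positive denominator, $S(s)\le2$ is equivalent to $h(s):=2\sinh s\,\cosh^2 s-\sinh s\,\cosh s-s\ge0$, and here the decisive computation is
\begin{align*}
  h'(s)=6\cosh^3 s-2\cosh^2 s-4\cosh s=2\cosh s\,(3\cosh s+2)(\cosh s-1),
\end{align*}
which is strictly positive for $s>0$ since $\cosh s>1$ there. Combined with $h(0)=0$ this gives $h(s)>0$, hence $S(\alpha)<2$ for every $\alpha\ne1$.

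With the $\lambda=0$ sum bound in hand, the two remaining assertions follow. For the upper bound in the first claim, positivity of $F_{vs}(\alpha^{-1},0)$ gives $F_{vs}(\alpha,0)\le S(\alpha)\le2$. For the full inequality \eqref{eq:Fvs2}, I would apply the bound $|F_{vs}(\cdot,\lambda)|\le F_{vs}(\cdot,0)$ to both $\alpha$ and $\alpha^{-1}$ and chain with $S(\alpha)\le2$:
\begin{align*}
  |F_{vs}(\alpha,\lambda)|+|F_{vs}(\alpha^{-1},\lambda)|\le F_{vs}(\alpha,0)+F_{vs}(\alpha^{-1},0)\le2.
\end{align*}
Equality forces equality in both steps: the first requires $\lambda=0$ by strictness of the $J_0$ estimate, and the second requires $\alpha=1$, so equality holds only at $(\alpha,\lambda)=(1,0)$. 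I expect the main obstacle to be the closed-form evaluation and simplification of $S$; the payoff is that the $e^s$ substitution simultaneously symmetrizes the expression and produces the clean factorization of $h'$, after which positivity is transparent. A minor point to check carefully is the continuous extension at $\alpha=1$ (equivalently $s=0$), where the relevant quotients are of indeterminate form.
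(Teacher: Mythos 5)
Your proof is correct and takes essentially the same route as the paper: reduce to $\lambda=0$ via $|J_0|\le 1$, compute the sum $F_{vs}(\alpha,0)+F_{vs}(\alpha^{-1},0)$ in hyperbolic form with $\alpha=e^{s}$ (your expression $\frac{1}{\cosh s}+\frac{s}{\sinh s\,\cosh^{2}s}$ is exactly the paper's $\sech\omega\,(1+\omega\,\sech\omega\,\csch\omega)$), and show it is maximized precisely at $s=0$ with value $2$, from which both the bound $F_{vs}(\alpha,0)\le 2$ and \eqref{eq:Fvs2} with its equality case follow. The only difference is that you verify the maximization by clearing denominators and a monotonicity argument ($h'(s)=2\cosh s\,(3\cosh s+2)(\cosh s-1)>0$), whereas the paper declares it manifest from $\sech\omega\le 1$ and $\omega/\sinh\omega\le 1$; your version just makes explicit a step the paper leaves to the reader.
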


\begin{proof}
  As in the case with $F_{vv}(\alpha, \lambda)$ and
  $F_{ss}(\alpha, \lambda)$, the statements about
  $F_{vs}(\alpha, \lambda)$ follow, once we demonstrate the desired
  properties of $F_{vs}(\alpha, 0)$, since by the same argument
  $|F_{vs}(\alpha, \lambda)| \leq F_{vs}(\alpha, 0)$. From the
  definition of the latter, it is clear that the function
  $F_{vs}(\alpha, 0)$ is positive and continuous for all $\alpha > 0$,
  including at $\alpha = 1$, since
  $\lim_{\alpha \to 1} F_{vs}(\alpha, 0) = 1$. Therefore, since
  $F_{vs} (\alpha, 0) \to 0$ as $\alpha \to 0$ or $\alpha \to \infty$,
  the function $F_{vs}(\alpha, 0)$ is uniformly bounded.

  To show the inequality in \eqref{eq:Fvs2}, we observe that
  \begin{align}
    F_{vs}(\alpha, 0) + F_{vs}(\alpha^{-1}, 0)
    & = \frac{2 \alpha
      \left(\alpha^2-1\right) \left(\alpha^4+4 \alpha^2 \ln
      \alpha -1\right)}{\left(\alpha^4-1\right)^2} \notag \\
    & = \sech \omega (1 + \omega \sech \omega \csch \omega), 
  \end{align}
  where $\omega = \ln \alpha $, which is manifestly maximized at
  $\omega = 0$ among all $\omega \in \R$. Finally, the upper bound on
  $F_{vs}(\alpha, 0)$ follows directly from \eqref{eq:Fvs2}.
\end{proof}

\bibliographystyle{plain}
\bibliography{multilayers}

\addcontentsline{toc}{section}{References}

\end{document}